\title[Rational points on ellipsoids]{Rational points on ellipsoids\\ and modular forms}
\author{C.~Burrin}
\address{Institute of Mathematics, University of Zurich}
\email{claire.burrin@math.uzh.ch}
\author{M.~Gr\"obner}
\address{Departement of Mathematics, ETH Zurich}
\email{matthias.groebner@student.ethz.ch}
\begin{document}

\maketitle

\begin{abstract}
The theory of modular forms and spherical harmonic analysis are applied to establish new best bounds towards the counting and equidistribution of rational points on spheres and other higher dimensional ellipsoids,  in what may be viewed as a textbook display of the `unreasonable effectiveness' of modular forms. 
  \end{abstract}

\section{Introduction}

Points with rational coordinates on the $d$-dimensional unit sphere 
$$
S^d=\{x\in \R^{d+1}: |x|^2=1\}
$$ 
form a dense countable set with a rich theory of (intrinsic\footnote{ Intrinsic refers here to the theory of approximation of points on the spheres by rational points that also lie on the sphere, and not simply in the general ambient space.}) Diophantine approximation. To study the distribution of rational points on the sphere, we typically order these by height; a rational point $x\in \Q^{r}$ has height $n\in\N$ if it can be written as $x=\tfrac{{\bf m}}{n}$ for some ${\bf m}\in \Z^{r}$ such that the greatest common divisor is $({\bf m},n)=1$. Then it is known that the sets $\Omega_n$ of all rational points of height $n$ (odd) on $S^d$ become equidistributed on the sphere as $n\to \infty$; see \cite{Duke2003} for $d=2$ and \thmref{Thm2} below for a precise statement that extends to all higher dimensional ellipsoids. Here, equidistribution is intimately connected to the arithmetic nature of the sets $\Omega_n$; the number $|\Omega_n|$ of rational points on the sphere is expressed by the arithmetic sum
$$
|\Omega_n|  
= \sum_{\delta\mid n} \mu(\delta)\, r_{d+1}(n^2/\delta^2),
$$
where $r_{d+1}(n)$ is the classical sum-of-squares function, $\mu(n)$ is the classical M\"obius function, and the sum is taken over the positive divisors of $n$. 
A common feature of such arithmetic functions is that their averages behave with striking regularity, and indeed the asymptotic growth of the set $\Omega_T$ of all rational points on the sphere of height {\em up to} $T$  is given by
 \begin{align}\label{size}
 |\Omega_T| \sim c_d\, T^d
 \end{align}
 as $T\to\infty$, for a uniform average density constant $c_d>0$. 
 
 To establish \eqref{size}, different approaches are available, including the circle method \cite{HeathBrown1996,Getz2018}, harmonic analysis on groups \cite{DukeRudnickSarnak1993,GorodnikNevo2012}, the recent study of light-cone Eisenstein series for $\G\setminus  \h^{d+1}$ \cite{KelmerYu2022,KelmerYu2023}, or analysis of modular forms \cite{Duke2003}, each offering unique insights. For instance, in \cite{KelmerYu2022}, which partly inspired this article, Kelmer and Yu make the observation that the Riemann hypothesis is true if and only if a smooth version of this counting problem holds. 
 
  We adopt the approach of modular forms to provide improvements on what is currently known on the counting and equidistribution of rational points on the sphere and higher dimensional ellipsoids, in what we view as a textbook illustration of the `unreasonable effectiveness of modular forms.' 

 \subsection*{Main results}
 Let $Q(x)$ be a positive definite quadratic form in $d+1$ variables, $d\geq2$, and let $\cE_Q=\{x\in \R^{d+1}: Q(x)=1\}$ be the associated ellipsoid. Our results and methods assume the following requirements on $Q(x)=\tfrac{1}{2}x^T Ax$:
  \begin{enumerate}
      \item The symmetric positive definite matrix $A$ has integral entries and even diagonal entries; 
      \item There exists a positive integer $N$, 
with $N$ equal to  4 times a squarefree odd integer, such that $NA^{-1}$ is integral with even diagonal entries. 
  \end{enumerate}
  In the case of the sphere, $Q(x)=|x|^2$, $A=2 I_{d+1}$ and $N=4$. We will use $\Omega_T$ to now denote the set of rational points on $\cE_Q$ of height up to~$T$, and $\Omega_n$ to denote the set of rational points on $\cE_Q$ of height equal to $n$. We express the asymptotic equidistribution of the sets $\Omega_n$ as follows.
  
    \begin{Thm}\label{Thm2}
    Let $Q$ be a positive definite quadratic form as described above. Let $\Psi\subset \cE_Q$ be a convex domain with piecewise smooth boundary, and let $\mu_Q $ denote the normalized Lebesgue measure on $\cE_Q$. Then 
    \begin{align}\label{equid_rate}
      \frac{|\Omega_n\cap \Psi|}{|\Omega _n|} = \mu_Q(\Psi) +O_{Q,\Psi,\eps}(n^{-(d-1)/(2d+3)+\eps})
    \end{align}
    as $n\to\infty$ along the sequence of integers coprime to $N$.
    \end{Thm}
    
    Our main result, as advertised, concerns the number of rational points on $\cE_Q$ of height up to $T$ with a precise error term. This is
  \begin{Thm}\label{Thm1}
    Let $Q$ be a positive definite quadratic form as described above. There exists a positive constant $c_Q$ such that 
 \begin{align}\label{record}
 |\Omega_T| = c_Q\, T^d\op{1+O_Q(T^{-d/(d+1)+\eps})}
 \end{align}
 as $T\to\infty$ for any $\eps>0$.
  \end{Thm} 

By a pigeonhole argument, the optimal error term is of order $O(T^{-1+\eps})$, whence the obtained result is close to optimal in large dimensions. Assuming that the Generalized Riemann Hypothesis (GRH) is true, the error term can be sharpened further; see \rmkref{rmk:GRH}. Moreover, an artefact of our approach is that the average density constant $c_Q$ is expressible as a computable residue; we will return to this point in Section \secref{sec:sketchofproofs}.

By averaging over \eqref{equid_rate}, we find that $\Omega_T$ becomes equidistributed on $\cE_Q$ with rate $O(T^{-(d-1)/(2d+3)+\eps})$ as $T\to\infty$. The proof of  \thmref{Thm1} can be adapted to recover a stronger rate of equidistribution when $d$ is even --- the reason for this restriction will become apparent at the end of \secref{sec:sketchofproofs}.

  \begin{Thm}\label{Thm2bis}
    Keeping the notation as above and $d$ even, we have
    \begin{align*}
      \frac{|\Omega_T\cap \Psi|}{|\Omega _T|} = \mu _Q(\Psi) + O_{Q,\Psi}(T^{-d/(2d+2)+\epsilon })
    \end{align*}
    as $T\to\infty$. 
  \end{Thm}

These rates improve on the corresponding recent results of \cite{KelmerYu2022, KelmerYu2023}. Finally we note that specializing $\Psi\subset\cE_Q$ to be a hyperspherical cap on the sphere $S^d$, equidistribution will occur at double the speed. In the sequel to this paper we  examine the small-scale distribution of rational points of height $n$ on the sphere.

\section{Discussion: Sketch of proofs and further results}\label{sec:sketchofproofs}

\subsection*{Sketch of proof}
The proof of \thmref{Thm1} builds on the study of the multiplicative generating series for $\Omega_n$, i.e.,
$$
R_Q(s) = \sum_{n=1}^\infty \frac{|\Omega_n|}{n^s}.
$$
An application of Perron's formula asserts that for any $T\not\in \N$ we have 
\begin{align*}
|\Omega_T| = \frac{1}{2\pi i} \int_{(\beta )} R_Q (s) \frac{T^s}{s}\, ds
\end{align*}
with integration in the complex plane over the vertical line passing through $\beta\in\R$, when $\beta$ lies to the right of the abscissa of absolute convergence of the series $R_Q (s)$.

Since $\Omega_n = \{\bm\in\Z^{d+1}:(\bm,n)=1,\, Q(m)=n^2\}$, we may express this generating series in more classical terms as
$$
R_Q(s) = \frac{1}{\zeta(s)}\sum_{n=1}^\infty \frac{r_Q(n^2)}{n^s},
$$
for the representation numbers $r_Q(n)= |\{\bm\in\Z^{d+1}:Q(\bm)=n\}|$.  
These representation numbers have a rich theory. In particular, they appear as the Fourier coefficients of the theta function associated to $Q$, namely
$$
\Theta_Q = \sum_{\bm\in \Z^{d+1}} q^{Q(\bm)} = \sum_{n=0}^\infty r_Q(n) q^n\qquad (q=e^{2\pi iz},\, z\in\h).
$$
Under the assumptions imposed on $Q$ earlier, $\Theta_Q$ is a modular form of weight $\tfrac{d+1}{2}$, level $N$, and Nebentypus a quadratic character modulo $N$ (see \secref{sec:appendix theta}).  We will prove in Section \secref{sec:proof of AC} that $R_Q(s)$ converges absolutely when $\re(s)>d$ and admits an analytic continuation to $\re(s)\geq \tfrac{d}{2}$ except for possible simple poles at $s=d$ and $s=\tfrac{d+1}{2}$. This is where the analysis of modular forms comes into play; see \thmref{thm:modular decomposition} below for a discussion. For now, we choose $\beta =d+\epsilon$ for some small $\epsilon >0$, and truncating Perron's formula at height $H$ we obtain that
\begin{align*}
|\Omega_T| = \frac{1}{2\pi i} \int_{\beta -iH}^{\beta +iH} R_Q (s) \frac{T^s}{s}\, ds + O\op{\frac{T^\beta }{H}}.
\end{align*}
A careful application of Cauchy's residue theorem --- requiring integral mean square estimates for standard $L$-functions of degrees 1 to 3 --- then yields \thmref{Thm1} with leading constant given by the residue  
\begin{align*}
    c_Q = \underset{s=d}{\rm Res}\, \frac{R_Q (s)}{s}.
  \end{align*}

\subsection*{Input from the theory of modular forms and further results}\label{sec:results modular forms}

Via Weyl's criterium, estimates on the size of the Fourier coefficients of theta series imply equidistribution in the setting of rational points, as well as in the related, well studied, setting of lattice points projected onto the sphere. In fact, when $d\geq3$, standard estimates imply the equidistribution of lattice points projected onto $S^d$ \cite{Pommerenke1959,GolubevaFomenko1985}. The case of $d=2$ famously requires more effort, due to the lack of a Ramanujan--Petersson bound for primitive forms of half-integer weight, and was settled by Duke \cite{Duke1988} and Golubeva and Fomenko \cite{GolubevaFomenko1987} using the work of Iwaniec \cite{Iwaniec1987}. This added difficulty is absent in the setting of rational points thanks to the Shimura correspondence (reviewed in \secref{sec:appendix Shimura}). The Shimura lift will relate Fourier coefficients of a half-integer weight primitive form at square arguments to the Fourier coefficients of an even weight primitive form, for which the Ramanujan--Petersson conjecture was settled by Deligne; see \thmref{thm:bounds on Fourier coefficients} from which we deduce \thmref{Thm2} in \secref{sec:proofs}.
 
 In the proof of \thmref{Thm1} sketched above, the main analytic input is, as announced, a specialization of the following theorem on Dirichlet series to the generating series $R_Q(s)$ (as carried out in \secref{sec:Collecting/Specializing}). It is in the course of its proof that we encounter the `unreasonable effectiveness' of modular forms.

\begin{Thm}\label{thm:modular decomposition}
  Let $k \geq 1$ be an integer or half-integer.   Given a modular form $f\in M_k(N,\chi )$ with $q$-expansion $f=\sum a(n)q^n$, the associated Dirichlet series 
  \begin{align*}
     L_f(s)=  \sum_{n=1}^\infty \frac{a(n^2)}{n^s}
  \end{align*}
converges absolutely for $\re (s)>2k-1$ and admits a meromorphic continuation, which is holomorphic in the half-plane $\re(s)\geq \tfrac{2k-1}{2}$ with the exception of possible poles at $s=2k-1$ and $s=k$.
\end{Thm}
\begin{proof}[Sketch of proof]
  If $f$ is a cusp form, the statement follows  from the analytic theory of the Shimura lift (when $d$ is even) and of the symmetric square (when $d$ is odd) \cite{Shimura1973,Shimura1975}, and $L_f$ is expressed via $L$-functions of degree 2 and 3, respectively. When $f$ is not a cusp form, we choose a preferred basis for the finite-dimensional vector space $M_k(N,\chi)$. Then $L_f$ is expressed via Dirichlet series whose coefficients are twisted sum-of-divisors functions. The statement follows from explicit $L$-series identities for these divisor sums (derived in \secref{sec:div}) and the standard theory of $L$-functions of degree 1.
\end{proof}

The stronger estimates for the equidistribution rates for $\Omega_T$  when $d$ is even reflects the current state of affairs: the analytic theory of $L$-functions of degrees 1 and 2 is better understood than that of $L$-functions of degree 3. To be more precise, evaluating $|\Omega_T|$ through Cauchy's residue theorem requires bounds on the integral mean square of various standard $L$-functions of degree 1--3, along their critical line. In general, it is conjectured that for a primitive $L$-function attached to a cuspidal automorphic representation of $\GL(n)$ there is a constant $C(\pi )$ such that
\begin{align*}
  \int_0^T |L(\tfrac{1}{2}+it,\pi)|^2 dt \sim C(\pi)\, T\log T
\end{align*}
as $T\to\infty$. This is known to be true for primitive $L$-functions of degree 1 and 2 (see, e.g., \cite{HardyLittlewood1918,Good1974,Zhang2005,Zhang2006}), but  remains open for $L$-functions of degree higher than 2. Milinovich and Turange-Butterbaugh showed that for all primitive $L$-functions of degree $n$, we have
\begin{align*}
  \int_0^T |L(\tfrac{1}{2}+it,\pi)|^2 dt \ll T(\log T)^{1+\eps}
\end{align*}
conditional on GRH and a mild conjecture towards Ramanujan--Petersson \cite{MilinovichTurnage2014}. In \secref{sym square mean square}, we give an unconditional proof of the following weaker result for the symmetric square $L$-function. 

\begin{Thm}\label{thm:sym square mean square}
Let $f\in S_k(N,\chi )$ be a primitive form for $\Gamma_0(N)$ of weight $k$ and character $\chi $ (mod $N$). Then as $T\to\infty$,
\begin{align*}
  \int_0^T |L(\tfrac{1}{2}+it,\sym^2 f)|^2 dt \ll T^{3/2} (\log T)^{18}.
\end{align*}
\end{Thm}

Our proof is based on an adaptation of Ivi\'c's approximate functional equation for the Riemann zeta function \cite{Ivic2003}.  An estimate of this strength was previously stated in \cite{Sank2002} for level 1 forms.

\subsection*{Acknowledgements}
The first named author acknowledges the support of Swiss National Science Foundation  Grant No. 201557.

\section{Modular forms and $L$-functions}\label{sec:appendix modular forms and L-functions}

For background,  
we refer the reader to \cite{CohenStromberg} and \cite[Chapter 14]{IwaniecKowalski2004}.

\subsection{Modular forms for $\G_0(N)$}

Let $\h=\{x+iy: y>0\}$ denote the upper half-plane, and let $\overline \h$ denote the extended upper half-plane, i.e., $\overline \h = \h \cup \R \cup \{\infty\}$. Throughout the text, let $q= e^{2\pi iz}$. In this paper we only work with the Hecke congruence group
\begin{align*}
  \G_0(N) = \ob{ \bpm a & b \\ c& d\epm \in \SL_2(\Z) : N\mid c},
\end{align*}
which acts on $\h$ by fractional linear transformation. For each integer $k\geq1$ this action defines the `slash' operator $f\vert_k (\bsm *&* \\ c& d \esm) (z) \coloneqq (cz+d)^{-k} f(\tfrac{az+b}{cz+d})$ on functions on $\h$. Note that $-I\in \G_0(N)$ and $f\vert_n (-I) = (-1)^k f$.

A modular form $f:\h\to\C$ of weight $k$, level $N$, and Nebentypus $\chi$ is characterized by two stringent requirements, namely that it is a holomorphic function on $\h \cup \Q \cup \{\infty\}$ and that it transforms as $f\vert_k \gamma  = \chi(\gamma ) f$ for each $\gamma  \in \Gamma_0(N)$. Note that $f=0$ unless $\chi(-1)=(-1)^k$. Modular forms of fixed level, weight and Nebentypus form a finite dimensional vector space, which we denote by $M_k(N,\chi )$. Equipped with the Petersson inner product, this space admits the orthogonal decomposition
\begin{align*}
  M_k(N,\chi ) = \cE_k(N,\chi ) \oplus S_k(N,\chi ).
\end{align*}
The subspace $\cE_k(N,\chi )$ is the space of Eisenstein series, and its orthogonal complement  $S_k(N,\chi )$ consists of those forms that vanish at all the cusps of $\G_0(N) \setminus \h$. The dimension of these spaces can be computed explicitly; we only note that  
$
   \dim S_k(N,\chi ) = O_N(k)
$.

\subsection{$L$-functions associated to modular forms}

Let $k$ be a positive integer. For any cusp form $f\in S_k(N,\chi )$ we denote its Fourier coefficients at $\infty$ by $a_f(n)$, that is, $ f= \sum_{n=1}^\infty a_f(n) q^n.$
As is customary in analytic number theory, we renormalize the Fourier coefficients of $f$ to $\lambda _f(n)\coloneqq  a_f(n)n^{-(k-1)/2}$. To each cusp form $f$ we now associate the Dirichlet series
\begin{align*}
  L(s,f) = \sum_{n=1}^\infty \frac{\lambda_f(n)}{n^s},
\end{align*}
which converges absolutely and uniformly on compact sets for sufficiently large $\re(s)$, admits a holomorphic continuation to all of $\C$, and a completed $L$-function with a functional equation, whose symmetry is about the vertical line at $\re(s)=\tfrac{1}{2}$. These properties are reminiscent of those of Dirichlet $L$-functions, with the exception that no existence of an Euler product has been mentionned so far.
For this we need to restrict $f$ to the subspace $S_k^{\rm new}(N,\chi)$ of newforms. Note that we have the direct sum decomposition
\begin{align}\label{cusp and new}
S_k(N,\chi) = \bigoplus_{M\mid N}\bigoplus_{m\mid \tfrac{N}{M}} B(m) S_k^{\rm new}(M,\chi),
\end{align}
where $(B(m)f)(z)=f(mz)$. We say that $f$ is primitive if it is a simultaneous eigenfunction of all Hecke operators and is normalized such that its first Fourier coefficient is $a_f(1)=1$. Then the Hecke $L$-function $L(s,f)$ admits the Euler product of degree 2
\begin{align*}
  L(s,f) = \prod_p (1-\lambda_f(p)p^{-s}+\chi(p)p^{-2s})^{-1}.
\end{align*}

The Rankin--Selberg convolution $L$-function attached to two primitive forms $f$, $g\in S^{\rm new}_k(N,\chi)$ is defined for sufficiently large $\re(s)$ by
\begin{align*}
  L(s,f\otimes g) = L(2s, \chi^2) \sum_{n=1}^\infty \frac{\lambda_f(n)\lambda_g(n)}{n^s}.
\end{align*}
This function admits an Euler product of degree $4$, and has a completion that analytically extends to all of $\C$ except when $f=g$, in which case there are simple poles at $s=0,1$. Furthermore, the completed function satisfies a functional equation relating $s$ to $1-s$. For the precise form of the completed $L$-function and its functional equation we refer to \cite{Li1979}. The symmetric square $L$-function associated to a primitive $f \in S^{\rm new}_k(N,\chi)$ is given by
\begin{align*}
  L(s,\sym^2 f) = L(2s, \chi ^2) \sum_{n=1}^\infty \frac{\lambda _f(n^2)}{n^s};
\end{align*}
it converges absolutely for $\re(s)>1$, admits an Euler product of degree $3$, and satisfies a functional equation linking $s$ to $1-s$.  The completed $L$-function 
\begin{align*}
  \Lambda(s,\sym^2 f) &= \pi^{-\tfrac{3}{2}(s+k-1)} \Gamma\left( \tfrac{s+k-1}{2} \right) \Gamma\left( \tfrac{s+k}{2} \right) \Gamma\left( \tfrac{s+1-\lambda}{2} \right)  L(s,\sym^2 f) ,
\end{align*}
where $\lambda$ is $0$ or $1$ according as $\chi(-1) = 1$ or $-1$, extends to a holomorphic function on all of $s \in \C$ except for possible simple poles at $s = 0,1$. Conditions under which $\Lambda(s,\sym^2 f)$ is holomorphic at $s=1$ or $s = 1$ are given in \cite[Theorem 2]{Shimura1975}. It was proved by Gelbart and Jacquet that $\Lambda(s,\sym^2 f)$ is the $L$-function of an automorphic representation of $GL(3, \mathbb{A}_\mathbb{Q})$ \cite{GelbartJacquet1978}.

\subsection{Half-integral weight modular forms} \label{sec:appendix Shimura}

Let $k\in \tfrac{1}{2}+ \N$, $N$ be divisible by 4, and $\chi$ be an even Dirichlet character modulo $N$. We say that a function $f:\h\to\C$ is a modular form of weight $k$, level $N$ and Nebentypus $\chi$ if it is holomorphic on $\h\cup \Q\cup\{\infty\}$ and transforms as $f\vert_k \gamma  = \nu_\theta^{2k} (\gamma) \chi (\gamma ) f$ for all $\gamma \in \Gamma_0(N)$. Here $\nu_\theta$ is the theta multiplier system, given by  
$$ \nu_\theta(\gamma) :=  \left(\frac{c}{d}\right) \eps_d^{-1}   \quad \text{ for }  \gamma = \bpm a & b \\ c& d\epm  \in \G_0(N), $$   
where $\eps_d=1$ or $i$ according as $d\equiv1$ or $3$ modulo $4$, and the symbol $(\tfrac{\cdot}{\cdot})$ is Shimura's extension of the Jacobi symbol, see \cite{Shimura1973}.

Half integral weight modular forms are related to even  integral weight modular forms through the Shimura correspondence \cite{Shimura1973,Niwa1975,Cipra1983}. Let $f\in S_{k}(N,\chi)$ be a half integral weight modular form as above and define a sequence $(A(n))_{n\geq1}$ via
\begin{align*}
  \sum_{n=1}^\infty \frac{A(n)}{n^s} = L(s-k+\tfrac{3}{2},\tilde \chi ) \sum_{n=1}^\infty \frac{a_f(n^2)}{n^s},
\end{align*}
with Dirichlet character  
$$
\tilde \chi(n) = \chi(n) \left( \frac{-1}{n} \right)^{k - \frac12}.
$$ 
The Shimura correspondence asserts that the lift $\tilde f = \sum A(n)q^n$ is a modular form in $M_{2k-1}(N/2,\chi^2)$ if $k\geq \tfrac{3}{2}$. If $k\geq \tfrac{5}{2}$, then $\tilde f \in S_{2k-1}(N/2,\chi^2)$. Moreover, the map $f\mapsto  \tilde f$ commutes with Hecke operators. Additionally, there is a version of the Shimura correspondence that maps a non-cusp form $f \in S_k(N,\chi )^\perp$ into $M_{2k-1}(N/2,\chi^2)$, see  \cite[Chapter 15.1.4]{CohenStromberg}. We will use this variant only when $k-\tfrac{1}{2}$ is even, giving us in such cases
$$ \frac{ a_f(0)}{2} L(\tfrac{3}{2} - k, \chi) + \sum_{n=1}^\infty A(n) q^n \in M_{2k-1}(N/2,\chi^2).$$

\subsection{Theta functions} \label{sec:appendix theta}

 The prototypical example of a modular form of half-integer weight is the classical theta series 
\begin{align*}
  \theta  = \sum_{n\in \Z} q^{n^2}
\end{align*}
with weight $1/2$, level 4 and trivial Nebentypus. More generally, we now recall that theta functions associated to quadratic forms give rise to integral and half-integral weight modular forms \cite{Schoeneberg1939,Pfetzer1953,Shimura1973}. Let $A$ be a $r \times r$ positive definite symmetric integral matrix with even diagonal entries and $Q$ the quadratic form associated with $A$, that is, $Q(x) = \frac12 x^T A x$. Let $N$ be the least positive integer such that $NA^{-1}$ has integral and even diagonal entries. A spherical function for $Q$ is a polynomial $P(x)$ in $d+1$ variables that is homogeneous and harmonic with respect to $\Delta_A=\sum_{i,j} a^*_{ij} \tfrac{\partial^2}{\partial x_i \partial x_j}$, where $A^{-1}=(a^*_{ij})$. Let $P$ be a spherical function for $Q$ of degree $\nu$.
Then the theta function associated with $P$ and $Q$, given by
\begin{align*}
  \Theta  = \sum_{{\bf m}\in \Z^{r}}P({\bf m})q^{Q({\bf m})} = \sum_{n=0}^\infty \sum_{\substack{{\bf m}\in \Z^{r} \\ Q({\bf m})=n }} P({\bf m}) q^n  = \sum_{n=0}^\infty r_\Theta (n) q^n,
\end{align*}
defines a modular form of weight $\tfrac{r}{2}+\nu$, level $N$, and Nebentypus $$
\chi = \begin{dcases}
(\tfrac{(-1)^{r/2}\det(A)}{\cdot}), &\text{ if } r \text{ is even} ;\\
(\tfrac{2\det(A)}{\cdot}), &\text{ if } r \text{ is odd}.
\end{dcases}
$$ 
It should be noted that since $P$ is a homogeneous polynomial,
\begin{itemize}
\item $\Theta=0$ if $\nu$ is odd;
\item $\Theta$ is a holomorphic cusp form if $\nu>0$;
\item and $P=1$ if and only if $\nu=0$.
\end{itemize}

\subsection{Estimates on Fourier coefficients}

We now record estimates on Fourier coefficients of modular forms at square arguments. For this we will need the twisted divisor sums      
\begin{align*}
      \sigma_k(\chi_1, \chi_2, n)  = \sum_{\delta \mid n} \chi_1(\delta) \chi_2(n/\delta) \delta^k
    \end{align*}
     where $k$ be a positive integer, $\chi_1$, $\chi_2$ are Dirichlet characters modulo $N_1$ and $N_2$, respectively. 
      It is easy to see that $\sigma_k(\chi_1,\chi_2,n)=0$ if $(n,N_1 N_2)>1$ and that 
    \begin{align}\label{eq:divasymp}
      |\sigma_k(\chi_1, \chi_2, n)| \asymp n^{k}
    \end{align} 
    whenever $(n,N_1 N_2)=1$.

\begin{thm}\label{thm:bounds on Fourier coefficients}
  Let $k\geq1$ and take $f\in M_k(N,\chi)$ with Fourier expansion at $\infty$ given by $f= \sum a(n)q^n$. Then
  \begin{enumerate}
      \item  If $f$ is primitive and $k\neq\tfrac32$, then $|a(n^2)|\ll_\eps  n^{k-1+\eps}$ for every $n\geq1$; 
      \item If $f$ is cuspidal but not primitive and $k\neq\tfrac32$, we have  
  \begin{align*}
      |a(n^2)| \ll_\eps \op{\dim(S_k(N,\chi ))N^{\epsilon -1}\frac{(4\pi )^k k^\epsilon}{\Gamma (k)} \scal{f,f}}^{1/2}  n^{k-1+\epsilon },
  \end{align*}
where $\scal{f,f}$ is the Petersson norm of $f$; 
\item If $f$ is not cuspidal then $|a(n^2)|\gg n^{2k-2}$ whenever $(n,N)=1$.
\end{enumerate}
\end{thm}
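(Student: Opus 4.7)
The plan is to address the three items separately, leveraging the decomposition $M_k(N,\chi) = \cE_k(N,\chi) \oplus S_k(N,\chi)$ together with the newform structure~\eqref{cusp and new}.

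For (1), when $k$ is an integer the estimate is Deligne's theorem on the Ramanujan--Petersson conjecture (extended to weight~$1$ by Deligne--Serre), which gives $|\lambda_f(n)| \ll_\epsilon n^\epsilon$ and hence $|a(n^2)| \ll_\epsilon n^{k-1+\epsilon}$. For half-integer weight $k \geq 5/2$, I would invoke the Shimura correspondence recalled in~\secref{sec:appendix Shimura}, which produces a primitive cusp form $\tilde f \in S_{2k-1}(N/2,\chi^2)$ of integer weight. Inverting the defining Dirichlet-series identity by M\"obius gives
\begin{align*}
a(n^2) = \sum_{d \mid n} \mu(d)\, \tilde\chi(d)\, d^{k-3/2} A(n/d),
\end{align*}
and substituting the Deligne bound $|A(m)| \ll m^{k-1+\epsilon}$ term by term yields the claim. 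The exclusion of $k = 3/2$ is forced because the Shimura lift then lands in weight~$2$ and may fail to be cuspidal, so Deligne is unavailable.

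For (2), I would pick an orthogonal basis $\{g_j\}$ of $S_k(N,\chi)$ consisting of $B(m)$-images of primitive newforms via~\eqref{cusp and new}, and write $f = \sum_j c_j g_j$. Parseval gives $\langle f, f \rangle = \sum_j |c_j|^2 \|g_j\|^2$, so Cauchy--Schwarz yields
\begin{align*}
|a(n^2)|^2 \leq \langle f, f \rangle \sum_j \frac{|a_{g_j}(n^2)|^2}{\|g_j\|^2}.
\end{align*}
Each $|a_{g_j}(n^2)| \ll n^{k-1+\epsilon}$ follows by applying~(1) to the underlying primitive newform (after checking that the shift $B(m)$ does not worsen the estimate). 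The sum has at most $\dim S_k(N,\chi)$ terms, and the factor $\|g_j\|^{-2}$ is controlled by Hoffstein--Lockhart-type lower bounds of the form $\|g\|^2 \gg N^{1-\epsilon} k^{-\epsilon} \Gamma(k)/(4\pi)^k$ for primitive $g$. Taking square roots produces the stated inequality.

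For (3), decompose $f = f_E + f_c$ with $f_E \in \cE_k(N,\chi)$ nonzero and $f_c \in S_k(N,\chi)$. A standard Eisenstein basis of $\cE_k(N,\chi)$ has Fourier coefficients expressible through the twisted divisor sums $\sigma_{k-1}(\chi_1,\chi_2,\cdot)$ (with appropriate half-integer weight modifications), and \eqref{eq:divasymp} gives $|\sigma_{k-1}(\chi_1,\chi_2,n^2)| \asymp n^{2k-2}$ whenever $n$ is coprime to $N$. The cuspidal component contributes only $O(n^{k-1+\epsilon})$ by parts~(1)--(2), which is negligible against $n^{2k-2}$, so the Eisenstein term dominates. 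The delicate point, and the place where I expect most of the effort to go, is to exclude cancellation between distinct basis elements of $\cE_k$ --- especially in the half-integer weight case where Cohen--Eisenstein-type contributions complicate the divisor-sum picture; this should follow from the fact that the characters attached to different basis pieces are distinct and that the coprimality hypothesis $(n,N)=1$ pins down a non-vanishing leading term.
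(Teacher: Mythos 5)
Your proposal follows essentially the same route as the paper's proof: the Eisenstein/cuspidal splitting, Deligne's bound combined with the Shimura lift at square arguments (with the same reason for excluding $k=\tfrac32$), and Parseval plus Cauchy--Schwarz with the Hoffstein--Lockhart lower bound for the non-primitive cuspidal case. The only noteworthy difference is that you explicitly flag the possible cancellation among distinct Eisenstein basis elements in part (3), a point the paper's proof passes over by asserting $|\sigma(n^2)|\asymp n^{2k-2}$ directly from the divisor-sum asymptotics.
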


\begin{proof}
The vector space $M_k(N,\chi)$ is finite-dimensional. Equipped with the Petersson inner product, it admits the orthogonal decomposition
\begin{align*}
  M_k(N,\chi ) = \cE_k(N,\chi ) \oplus S_k(N,\chi ).
\end{align*}
Following this decomposition, we write 
$$
a(n^2)= \sigma(n^2) + \tau(n^2)
$$ 
for each $n\geq1$. If $k$ is an integer, the subspace $\cE_k(N,\chi )$ is spanned by Eisenstein series with Fourier coefficients of the form $\sigma_{k-1}(\chi_1,\chi_2,\delta n)$, where $\chi_1, \chi_2$ are primitive Dirichlet characters modulo $N_1, N_2$ respectively, and $\delta N_1 N_2\mid N$; see \cite[Theorem 8.5.17]{CohenStromberg}. The sum-of-divisors function asymptotic \eqref{eq:divasymp} then gives $$|\sigma(n^2)| \asymp n^{2k-2}$$ when $(n,N)=1$. If $k\in \tfrac{1}{2}+\N$, a basis of  $\cE_k(N,\chi )$ consists of Eisenstein series whose Fourier coefficients evaluated at $n^2$ with $n$ odd are (up to a constant) of the form
$$
\sum_{\delta \mid n} \mu(\delta) \chi'''(\delta) \delta^{k-3/2} \sigma_{2k-2}(\chi',\chi'',n/\delta)
$$
for a quadratic Dirichlet character $\chi'''$ and principal Dirichlet characters $\chi'$ and $\chi''$ of modulus $m$ and $\frac{N}{2m}$, respectively, with $m \vert \frac{N}{4}$; see \cite[Chapter 7]{WangPei2012}. Once again by  multiplicativity, we find that $|\sigma(n^2)| \gg n^{2k-2}$ when $(n,\frac{N}{2})=1$.

We next estimate $\tau (n^2)$. Fix an orthogonal basis of primitive forms $f_1,\dots,f_r$ for $S_k(N,\chi)$ (that may be of lower level --- see \eqref{cusp and new}) and write $\tau(n^2)= \sum_{j=1}^r \scal{f,f_j} a_j(n^2)$, where $a_j(n)$ are the Fourier coefficients at $\infty$ of $f_j$. If $k\in\N$ then Deligne's estimate (the Ramanujan--Petersson conjecture) asserts that $|a_j(n)|\leq d(n)n^{(k-1)/2}$ uniformly for each $n\geq1$. (Here $d(n)$ denotes the standard divisor sum $d(n)=\sigma_0(n)$.) When $k\in \tfrac{1}{2}+\N$, the Ramanujan--Petersson conjecture does not hold for all $n$. However at perfect squares, we have the relation
\begin{align*}
  a_j(n^2) = \sum_{\delta \mid n} \mu(\delta )\chi (\delta )\delta^{k-3/2}A_j(n/\delta ),
\end{align*} 
where $\sum A_j(n)q^n \in S_{2k-1}(N/2,\chi^2)$ is the Shimura lift of $f_j$. Note that here we use that $k\neq\tfrac32$. Because the Shimura lift is Hecke-equivariant, Deligne's estimate again applies, i.e., $|A_j(n)|\leq d(n)n^{k-1}$ for each $n\geq1$.  We conclude with the standard divisor bound that in all cases
$$|\tau(n^2)| \ll_\epsilon  n^{k-1+\epsilon}\sum_{j=1}^r |\scal{f,f_j}|$$ for any $\epsilon>0$. To estimate the inner products (in particular their dependence on $k$), we consider  Parseval's identity $$\scal{f,f}=\sum |\scal{f,f_j}|^2 \scal{f_j,f_j}.$$ On the right hand side, we apply the uniform lower bound $\scal{f_j,f_j}\gg_{\epsilon,N} \tfrac{\G(k)k^{-\eps}}{(4\pi)^k}$ \cite{HoffsteinLockhart1994}. We conclude with an application of the Cauchy--Schwarz inequality and the estimate $r=O_N(k)$.
\end{proof}

\section{Harmonic analysis on the sphere}\label{sec:appendix harmonic analysis}
We refer the reader to the standard reference \cite{SteinWeiss1971}.    

\subsection{Spherical functions}
 Let $d\geq2$. Let $\cH_\nu(d)$ denote the space of all real homogeneous harmonic polynomials of degree $\nu$ in $d+1$ variables. Since any homogeneous polynomial of degree $\nu$ is entirely determined by its values on the unit sphere --- via $P(x)=\|x\|^\nu P(\tfrac{x}{\|x\|})$ --- there is a linear isomorphism between the space $\cH_\nu(d)$ and the space $\cH_\nu(S^{d})$ of spherical harmonics, i.e., of restrictions of such polynomials to the sphere $S^{d}$. We have the dimension formula
$$
n(\nu)\coloneqq \dim \cH_\nu(S^{d}) = \binom{d+\nu}{\nu}-\binom{d-2+\nu}{\nu-2} \asymp \nu^{d-1}
$$
if $d\geq2$ and $\nu\geq2$, while $\dim \cH_1(S^{d})=d$ and $\dim \cH_0 (S^{d})=1$.  

\subsection{Spectral decomposition of $L^2(S^{d})$}
The set of all finite linear combinations in 
$
\bigcup_{\nu\geq0} \cH_\nu(S^{d})
$
is dense in $L^2(S^{d})$, which further admits the direct sum decomposition
$$
L^2(S^{d}) = \bigoplus_{\nu\geq0} \cH_\nu(S^{d}).
$$
In fact each space $\cH_\nu(S^{d})$ is the eigenspace of the Laplacian on $S^{d}$ to the eigenvalue $-\nu(d-1+\nu)$. Further, each function $F\in C^\infty(S^{d})$ has a (unique) spectral expansion of the form
\begin{align}\label{spectral exp}
F = \sum_{\nu\geq0} F_\nu 
\end{align}
with 
\begin{align}\label{eq:def spectral projection}
  F_\nu(x) = \sum_{k=1}^{n(\nu)} \scal{F,Y_\nu^{k}} Y_\nu^{k}(x) = \int_{S^d} F(y) Z_\nu (x,y) d \mu (y),
\end{align}
where $\{Y_\nu^1,\dots, Y_\nu^{n(\nu)}\}$ is an orthonormal basis of $\cH_\nu(S^{d})$, $\scal{\cdot,\cdot}$ is the standard inner product on $L^2(S^d)$ with respect to the normalized rotation-invariant measure $\mu=\tfrac{d \sigma }{\sigma (S^d)}$ on $S^d$,
and 
$$
Z_\nu (x,y) = \sum_{k=1}^{n(\nu )} Y_\nu^k(x) Y_\nu^k(y)
$$
is the zonal harmonic of degree $\nu$. We record the simple properties
\begin{align*}
  \int_{S^d} Z_\nu (x,y) Z_\nu (y,z) d \mu (y) = Z_\nu (x,z), \quad |Z_\nu (x,y)|\leq n(\nu ) \, \text{ for all $x,y,z\in S^d$}.
\end{align*}

\begin{prop}\label{prop:zonal}
  Let $F\in C^{\infty}(S^d)$ and let $F_\nu$ be the spectral projection of $F$ to $\cH_\nu (S^d)$. Then for each $j\in\N_0$ we have
  $\|F_\nu \|_\infty \ll \nu^{d-1-2j}\|\Delta^j F\|_\infty$.
  \end{prop}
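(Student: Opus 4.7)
The plan is to exploit the fact that $F_\nu$ lies in the $\Delta$-eigenspace with eigenvalue $-\nu(d-1+\nu)$, so that taking Laplacians trades smoothness of $F$ for powers of $\nu$, while the zonal kernel representation \eqref{eq:def spectral projection} controls the $L^\infty$-norm of each spectral projection in terms of the $L^\infty$-norm of the function being projected.

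First, observe that since each $\cH_\nu(S^d)$ is $\Delta$-invariant, spectral projection commutes with $\Delta^j$; equivalently, applying $\Delta^j$ termwise to \eqref{spectral exp} yields
\begin{align*}
  (\Delta^j F)_\nu = \Delta^j F_\nu = \bigl(-\nu(d-1+\nu)\bigr)^j F_\nu.
\end{align*}
Hence, for $\nu\geq 1$,
\begin{align*}
  F_\nu = \bigl(-\nu(d-1+\nu)\bigr)^{-j} (\Delta^j F)_\nu.
\end{align*}

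Second, I would apply the zonal representation \eqref{eq:def spectral projection} to $\Delta^j F$ in place of $F$, which together with the pointwise bound $|Z_\nu(x,y)|\leq n(\nu)$ and the fact that $\mu$ is a probability measure gives
\begin{align*}
  \|(\Delta^j F)_\nu\|_\infty \leq n(\nu)\, \|\Delta^j F\|_\infty.
\end{align*}

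Combining these two displays with the dimension asymptotic $n(\nu)\asymp \nu^{d-1}$ and $\nu(d-1+\nu)\asymp \nu^2$ yields
\begin{align*}
  \|F_\nu\|_\infty \leq \bigl(\nu(d-1+\nu)\bigr)^{-j} n(\nu)\, \|\Delta^j F\|_\infty \ll \nu^{d-1-2j}\, \|\Delta^j F\|_\infty,
\end{align*}
which is the claimed bound. No step is really an obstacle here; the only point requiring minor care is that the identity $(\Delta^j F)_\nu = \Delta^j F_\nu$ is legitimate, which follows because $F\in C^\infty(S^d)$ so the spectral expansion and all its termwise Laplacians converge, and $\Delta$ acts diagonally on the decomposition $L^2(S^d)=\bigoplus_\nu \cH_\nu(S^d)$.
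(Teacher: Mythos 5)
Your argument is correct and is essentially the paper's own proof: the identity $F_\nu=(-\nu(d-1+\nu))^{-j}(\Delta^jF)_\nu$ is exactly the paper's formula $F_\nu(x)=(-\nu(d-1+\nu))^{-j}\int_{S^d}\Delta^jF(y)\,Z_\nu(x,y)\,d\mu(y)$, obtained there by self-adjointness of $\Delta$ and the eigenfunction property of $Z_\nu$, and the final bound via $|Z_\nu(x,y)|\leq n(\nu)\asymp\nu^{d-1}$ is identical. The only difference is cosmetic: you phrase the key step as commutation of the spectral projection with $\Delta^j$ rather than as integration by parts against the zonal kernel.
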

  
  \begin{proof}
    From the definition \eqref{eq:def spectral projection} we have the immediate bound
  \begin{align}\label{triv bd}
    |F_\nu (x)| \ll \|F\|_\infty \nu^{d-1}.
  \end{align}
 For the general estimate, we use that 
  $$
  \Delta_x Z_\nu  (x,y) = \Delta_y Z_\nu(x,y) = -\nu(d-1+\nu) Z_\nu(x,y)
  $$ and that $\Delta$ is self-adjoint, so that for each $j\in \N_0$ we have
  \begin{align*}
    F_\nu (x) = \frac{1}{(-\nu(d-1+\nu))^j}\int_{S^d} \Delta^j F(y) Z_\nu(x,y)\, d\mu(y).
  \end{align*}
From this identity the statement follows from $|Z_\nu (x,z)|\leq n(\nu )$ for all $x\in S^d$.
  \end{proof}

\section{Proof of \thmref{Thm2} (and \thmref{Thm2bis})}\label{sec:proofs}

Let $f\in C^\infty(\cE_Q)$ be a test function for the discrete probability measure 
$$
\mu_n(f) \coloneqq \frac{1}{|\Omega_n|} \sum_{x\in\Omega_n} f(x).
$$
By a change of variable, we may address the equidistribution problem using harmonic analysis on the sphere. Let $B$ be the invertible matrix such that $A=2B^T B$ and $Q(x)=\| Bx\|^2$. Then $\cE_Q=B^{-1}(S^d)$. The function $F=f\circ B^{-1}\in C^\infty(S^d)$ can be expressed as the pointwise convergent spectral decomposition $F=\sum F_\nu$, where each $F_\nu$ is the spectral projection of $F$ onto $\cH_\nu (S^d)$. For each $\nu\geq0$, the function $f_\nu  = F_\nu \circ B$ is a spherical function of degree $\nu$ for $A$.  In particular, 
$f_0(x)$ is the constant function $f_0(x)= \int_{\cE_Q} f(y)\, d\mu_Q(y)=\mu_Q(f)$.
With these considerations, we have the spectral expansion
\begin{align}\label{a}
 \sum_{x\in \Omega_n} f(x) =  \mu_Q(f)|\Omega_n|+ \sum_{\nu =1}^\infty \sum_{x\in \Omega_n} f_\nu (x).
\end{align}
For each spherical function $f_\nu$, $\nu>0$, consider the associated theta function 
$$
\Theta = \sum f_\nu ({\bf m})q^{Q({\bf m})} = \sum_{n\geq1} r_\Theta(n) q^n;
$$ 
see \secref{sec:appendix theta}. 
An elementary computation provides the identity
\begin{align*}
\cP_\nu(\Omega_n) \coloneqq   \sum_{x \in \Omega_n} f_\nu (x) = n^{-\nu} \sum_{\delta \mid n}  \mu(\delta )\delta^\nu r_\Theta(n^2/\delta^2).
\end{align*}

\begin{lm}\label{lm:ThetaNorm}
  Let $\Theta$ be the theta function of weight $k$ associated to a positive definite quadratic form $Q$ of level $N$ and a spherical function $P$ of degree $\nu>0$. 
  Then
  \begin{align*}
    \scal{\Theta,\Theta} \ll_{Q,N} \frac{\G(k)}{(4\pi)^{k} } \|P\vert_{\cE_Q}\|^2_{\infty}.
  \end{align*}
  \end{lm}
  
    \begin{proof}
By Rankin--Selberg unfolding (see, e.g., \cite[Chapter 1.6]{BumpBook}) we  have
\begin{align*}
  \int_{\G_0(N)\setminus \h} |\Theta|^2 y^k E(z,s)\, \frac{dxdy}{y^2} &= \frac{\G(s+k-1)}{(4\pi)^{k+s-1}} \sum_{n\geq1} \frac{|r_\Theta(n)|^2}{n^{s+k-1}}
\end{align*}
for $\re(s)>1$, where $E(z,s)$ is the standard nonholomorphic Eisenstein series for $\Gamma_0(N)$ at the cusp at infinity. In particular, the residue of its simple pole at $s=1$ is given by 
    ${\rm vol}(X_0(N))^{-1}$. We may assume that $s>1$ is real. Then from the definition of $r_\Theta(n)$, the right hand-side is smaller than or equal to
    \begin{align*}
       \frac{\G(s+k-1)}{(4\pi)^{k+s-1}} \|P\vert_{\cE_Q}\|_\infty^2 \sum_{n=1}^\infty  \frac{r_Q(n)^2}{n^{s+k-\nu-1}}.
    \end{align*}
    Since $k=\tfrac{d+1}{2}+\nu$, the latter Dirichlet series does not depend on $\nu$. The statement follows by taking $s\to1$ and collecting residues.
    \end{proof}


Applying successively \thmref{thm:bounds on Fourier coefficients}, \lemref{lm:ThetaNorm} and \propref{prop:zonal}, we find that
\begin{align}\label{bound}
  |\cP_\nu(\Omega_n)| 
  &\ll n^{(d-1)/2+\epsilon } \nu^{d-1/2-2j+\epsilon } \|\Delta_A^j f\vert_{\cE_Q}\|_\infty
\end{align}
for each $j\geq0$. Fix a domain $\Psi\subset \cE_Q$ with piecewise smooth boundary. Fix $\eta>0$ and choose $f=\chi_\eta$ to be a smooth approximation of the characteristic function $\chi_\Psi$ such that
\begin{enumerate}
    \item $|\mu_Q(\chi_\eta)-\mu_Q(\Psi)| < \eta $;
    \item $\Delta_A^j \chi_\eta \ll \eta^{-2j}$ for all $j\geq0$ uniformly.
\end{enumerate}
Then upon decomposing the sum over $\nu$ in \eqref{a} into
\begin{align*}
\sum_{\nu \leq \lceil \eta^{-1} \rceil } + \sum_{\nu > \lceil \eta^{-1} \rceil },
\end{align*}
and choosing $2j > d+ \tfrac12$, we find that 
\begin{align*}
\sum_{n\in \Omega_n} \chi_\eta (x) = \mu_Q(\Psi)|\Omega_n| + O(\eta|\Omega_n|) + O(\eta^{-d-1/2}n^{(d-1)/2+\epsilon } ) .
\end{align*}
Averaging over $n=1,\dots,T$ and choosing $\eta = T^{-(d-1)/(2d+3)}$ yields
\begin{align*}
\mu_T(\chi_\eta)= \mu_Q(\Psi)| + O_Q(T^{-(d-1)/(2d+3)+\epsilon}),
\end{align*} 
where we used that $|\Omega_T|\asymp_Q T^d$. Since the right-hand side does not depend on the particular choice of smooth approximation $\chi_\eta$, we approximate the characteristic function supported on $\Psi$ by above and below to reach our conclusion. Similarly, by restricting to $(n,N)=1$,  choosing $\eta = n^{-(d-1)/(2d+3)}$ we find that 
\begin{align*}
  \mu_n(\chi_\eta ) = \mu_Q(\Psi) + O(n^{-(d-1)/(2d+3)+\epsilon}),
\end{align*}
where we use that $|\Omega_n|\asymp_Q n^{d-1}$ as provided by  \thmref{thm:bounds on Fourier coefficients}. This concludes the proof of \thmref{Thm2}.

\begin{rmk}
  The proof of \thmref{Thm2bis} follows along the same lines upon replacing the estimate \eqref{bound} by
  \begin{align}\label{improved bound on average}
 |\cP_\nu(\Omega_T)| \ll T^{d/2+\epsilon } \|f\vert_{\cE_Q}\|_\infty,
  \end{align}
  which will be proven in Section \secref{sec:Proof of Thm1}; see \thmref{thm:sum of P}.
\end{rmk}

\section{Proof of \thmref{thm:modular decomposition}}\label{sec:proof of AC}

  For convenience, we reproduce the statement of \thmref{thm:modular decomposition} here:
  \begin{thm}[\thmref{thm:modular decomposition}]
  Let $k \geq 1$ be an integer or half-integer.  Given a modular form $f\in M_k(N,\chi )$, with $N$ equal to 4 times a squarefree odd integer, and $q$-expansion $f=\sum a(n)q^n$, the associated $L$-series 
    \begin{align*}
       L_f(s)=  \sum_{n=1}^\infty \frac{a(n^2)}{n^s}
    \end{align*}
  converges absolutely for $\re (s)>2k-1$ and admits a meromorphic continuation, which is holomorphic in the half-plane $\re(s)\geq \tfrac{2k-1}{2}$ with the exception of possible poles at $s=2k-1$ and $s=k$.
\end{thm}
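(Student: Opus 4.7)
The plan is to treat the Eisenstein and cuspidal parts of $f$ separately via the orthogonal decomposition $M_k(N,\chi) = \cE_k(N,\chi) \oplus S_k(N,\chi)$, each piece contributing to $L_f$ in a different way. The claimed absolute convergence for $\re(s) > 2k-1$ follows from the uniform Hecke-type bound $|a(n)| \ll n^{k-1+\eps}$ that underlies the proof of \thmref{thm:bounds on Fourier coefficients}.

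For the cuspidal contribution, I reduce via \eqref{cusp and new} to the case of a primitive form $g$ of some level $M \mid N$ (the shifted basis elements $B(m)g$ contribute only local correction factors, which do not alter the pole structure in the region of interest). For integer $k$, combining the identity defining the symmetric square,
\[
L(s, \sym^2 g) = L(2s, \chi^2) \sum_{n \geq 1} \frac{\lambda_g(n^2)}{n^s},
\]
with the renormalization $a_g(n^2) = \lambda_g(n^2) n^{k-1}$, gives
\[
\sum_{n \geq 1} \frac{a_g(n^2)}{n^s} = \frac{L(s-k+1, \sym^2 g)}{L(2s-2k+2, \chi^2)}.
\]
By Gelbart--Jacquet the numerator is meromorphic on $\C$ with at most a simple pole at $s = k$, and the denominator is nonvanishing on $\re(2s-2k+2) \geq 1$ by classical nonvanishing of Dirichlet $L$-functions. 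For half-integer $k \neq \tfrac{3}{2}$, the Shimura correspondence yields the analogous factorization
\[
\sum_{n \geq 1} \frac{a_g(n^2)}{n^s} = \frac{L(s-k+1, \tilde g)}{L(s-k+3/2, \tilde\chi)},
\]
where $L(s-k+1, \tilde g)$ is the entire Hecke $L$-function of the cuspidal Shimura lift. Either way, the cuspidal contribution is holomorphic on $\re(s) \geq \tfrac{2k-1}{2}$ with the possible exception of a simple pole at $s = k$.

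For the Eisenstein contribution, I use the explicit bases recalled in the proof of \thmref{thm:bounds on Fourier coefficients}: the Fourier coefficients are twisted divisor sums $\sigma_{k-1}(\chi_1, \chi_2, n)$ for integer $k$, and M\"obius-twisted combinations of $\sigma_{2k-2}$ for half-integer $k$. Evaluating these at $n^2$ and summing, multiplicativity reduces each contribution to an explicit ratio of degree-1 Dirichlet $L$-functions; the precise identities are the content of \secref{sec:div}. Combined with the standard theory of Dirichlet $L$-functions, this confines the possible singularities of the Eisenstein contribution in $\re(s) \geq \tfrac{2k-1}{2}$ to $s = 2k-1$ (from the leading growth $|\sigma_{k-1}(\chi_1, \chi_2, n)| \asymp n^{k-1}$) and possibly $s = k$.

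The main obstacle I anticipate is the bookkeeping: verifying that the denominators $L(2s-2k+2, \chi^2)$ and $L(s-k+3/2, \tilde\chi)$ do not vanish on the half-plane $\re(s) \geq \tfrac{2k-1}{2}$, which reduces to the classical nonvanishing of Dirichlet $L$-functions on the edge of the critical strip, and handling the edge case $k = \tfrac{3}{2}$, where the Shimura lift need not be cuspidal and the resulting Eisenstein-type pole of its lift must be shown to lie at $s = 2k-1$ or $s = k$.
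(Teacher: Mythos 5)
Your proposal follows essentially the same route as the paper: decompose into Eisenstein and cuspidal parts, reduce the cuspidal contribution to primitive forms and express it via the symmetric square (integer weight) or the Shimura lift (half-integer weight), and handle the Eisenstein contribution through the explicit twisted divisor-sum identities of \secref{sec:div}, with the same pole locations and the same flagged subtleties (nonvanishing of the denominator $L$-functions on the edge of the critical strip, the non-cuspidal Shimura lift when $k=\tfrac{3}{2}$, and the level-shift factors controlled by the squarefree hypothesis on $N$). The approach is correct as outlined.
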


The proof proceeds along the same lines as the one for \thmref{thm:bounds on Fourier coefficients}: we first decompose $f$ into a linear combination of Eisenstein series and primitive cusp forms, and then consider each arising $L$-series individually. When the coefficients of the arising $L$-series are Fourier coefficients of a primitive form, the analytic continuation of the $L$-series will follow from the theory of the symmetric square \cite{Shimura1975}, respectively of the Shimura lift \cite{Shimura1973}, depending on the weight $k$. When the coefficients are Fourier coefficients of Eisenstein series, we rely on the explicit identities developed in the next subsection and the explicit computations of \cite[Chapter 7]{WangPei2012}.

\subsection{Identities involving divisor sums}\label{sec:div}
    In this section, let $k$ be a positive integer, $\chi_1$ and $\chi_2$ be Dirichlet characters modulo $N_1$ and $N_2$, respectively. We consider again the divisor sums
    \begin{align*}
      \sigma_k(\chi_1, \chi_2, n)  = \sum_{\delta \mid n} \chi_1(\delta) \chi_2(n/\delta) \delta^k
    \end{align*}
    indexed over all positive divisors $\delta $ of $n$. 
    Since $\sigma_k(\chi_1, \chi_2, n)$ is a Dirichlet convolution, it is multiplicative and we have the identity
    \begin{align}\label{id:1}
    \sum_{n\geq1} \frac{\sigma_k(\chi_1, \chi_2, n)}{n^s} = L(s-k,\chi_1)L(s,\chi_2).
    \end{align}
      Let $\chi=\chi_1 \chi_2$ be the corresponding Dirichlet character modulo $N=N_1 N_2$.     The multiplicativity relation takes the explicit form
    \begin{align}\label{id:mult}
      \sigma_k(\chi_1,\chi_2,m)\sigma_k(\chi_1,\chi_2,n) = \sum_{\delta \mid (m,n)} \chi(\delta ) \delta ^k \sigma_k(\chi_1,\chi_2,mn/\delta ^2)
    \end{align}
valid    for all $m,n\geq1$. 
  
    \begin{lm}\label{lm:Ramanujan}
        When $\re(s)>k+l+1$,  we have the identity
        \begin{align*}
            \sum_{n\geq1} \frac{\sigma_k(\chi_1, \chi_2, n) \sigma_l(\chi_1, \chi_2, n)}{n^s} = \frac{L(s-k-l,\chi_1^2)L(s,\chi_2^2)L(s-k,\chi)L(s-l,\chi)}{L(2s-k-l,\chi^2)}.
            \end{align*}
    \end{lm}
    When $\chi_1$ and $\chi_2$ are trivial, this is a classical identity due to Ramanujan.
    \begin{proof} 
    The Euler product of \eqref{id:1} is given by
    \begin{align*}
    \sum_{n\geq1} \frac{\sigma_k(\chi_1, \chi_2, n)}{n^s} = \prod_p \left( 1 - \alpha_{1,p} p^{-s} \right)^{-1} \left( 1 - \alpha_{2,p} p^{-s} \right)^{-1},
    \end{align*}
    where the complex numbers $\alpha_{1,p}$ and $\alpha_{1,p}$ satisfy the relations
    \begin{align*}
    \alpha_{1,p} + \alpha_{2,p} &=  \chi_1(p) p^k+\chi_2(p) \\
    \alpha_{1,p} \, \alpha_{2,p} &= \chi_1(p)\chi_2(p) p^k = \chi(p)p^k. 
    \end{align*}
    The Rankin--Selberg convolution of \eqref{id:1} then has Euler product
    \begin{align*}
      L(2s-k-l,\chi^2) \sum_{n\geq1} \frac{\sigma_k(\chi_1, \chi_2, n) \sigma_l(\chi_1, \chi_2, n)}{n^s} = \prod_p \prod_{i,j} \left( 1 - \alpha_{i,p}\beta_{j,p} p^{-s} \right)^{-1} 
    \end{align*}
    (see, e.g., \cite[Theorem 1.63]{BumpBook}), which, given the relations above, is equal to 
    \begin{align*}
    L(s-k-l,\chi_1^2)L(s,\chi_2^2)L(s-k,\chi  )L(s-l,\chi).
    \end{align*}
    The statement follows.
    \end{proof}

    \begin{lm}\label{lm:2}
    When $\re(s)>2k+1$, we have
    \begin{align}\label{id:n^2}
      \sum_{n\geq1} \frac{\sigma_k(\chi_1,\chi_2,n^2)}{n^{s}} 
      &=  \frac{L(s-2k,\chi_1^2)L(s,\chi_2^2)L(s-k,\chi)}{L(2s-2k,\chi^2) }. 
    \end{align}
    \end{lm}
    \begin{proof}
    By the multiplicativity relation \eqref{id:mult} we have
    \begin{align*}
      \sum_{n\geq1} \frac{\sigma_k(\chi_1,\chi_2,n)^2}{n^{s}} 
      &= L(s-k,\chi) \sum_{n\geq1} \frac{\sigma_k(\chi_1,\chi _2,n^2)}{n^s} ,
    \end{align*}
    and the statement follows by applying \lemref{lm:Ramanujan} to the left hand side.
    \end{proof}
    \begin{lm}\label{lm:3}
       Let $\delta $ be a positive integer of the form $\delta =2^j \delta '$ with $\delta '$ odd and squarefree. Then for $\re(s)$ sufficiently large we have
       \begin{align*}
        \sum_{\substack{n=1\\ \delta\mid n^2}}^\infty \frac{\sigma_k(\chi_1,\chi_2,n^2/\delta )}{n^s} &=
        \frac{(2^{\lceil j/2 \rceil}\delta ' )^{-s} \sigma_k(\chi _1,\chi _2,2^\epsilon \delta ')}{\sigma_{k-s}(\chi ,2^\epsilon \delta ') }\sum_{n=1}^\infty \frac{\sigma_k(\chi _1,\chi _2,n^2)}{n^s},
       \end{align*}
        where $\sigma_t(\chi,n)=\sum_{\delta \mid n} \chi (\delta )\delta ^t$, and $\epsilon=0$ if $j$ is even while $\epsilon=1$ if $j$ is odd.
    \end{lm}
   
    \begin{proof}
      The left hand side is equal to
    \begin{align*}
        & \sum_{\substack{n=1\\2^{\lceil j/2 \rceil} \delta ' \mid n}}^\infty \frac{\sigma_k(\chi _1,\chi _2,n^2/\delta )}{n^s}= (2^{\lceil j/2 \rceil}\delta ' )^{-s} \sum_{n=1}^\infty \frac{\sigma_k(\chi _1,\chi _2,2^\epsilon \delta ' n^2)}{n^s},
    \end{align*}
    where $\epsilon=0$ if $j$ is even and $\epsilon=1$ if $j$ is odd. Via  \eqref{id:mult} we take note of 
    \begin{align*}
      \sigma_k(\chi _1,\chi _2,2^\epsilon \delta ')\sigma_k(\chi _1,\chi _2,n^2) = \sum_{\mu \mid (2^\epsilon \delta ',n)}\chi(\mu )\mu ^k \sigma_k(\chi _1,\chi _2,2^\epsilon \delta 'n^2/\mu ^2).
    \end{align*}
With this multiplicativity relation we deduce that
    \begin{align*}
      \sigma_k(\chi _1,\chi _2,2^\epsilon \delta ')\sum_{n=1}^\infty \frac{\sigma_k(\chi _1,\chi _2,n^2)}{n^s} = \sigma_{k-s}(\chi, 2^\epsilon \delta ')  \sum_{n=1}^\infty \frac{\sigma_k(\chi _1,\chi _2,2^\epsilon \delta ' n^2)}{n^s} .
    \end{align*}
    The statement follows. 
    \end{proof}
    
    \begin{rmk}\label{rmk:silly}    
    Because the sum-of-divisors function is multiplicative we have the elementary identity
    \begin{align*}
        \sigma_{k-s}(\chi,2^\epsilon \delta') =\prod_{p\mid 2^\epsilon \delta '} (1+\chi(p)p^{k-s}).
    \end{align*}
From this we observe that $\sigma_{k-s}(\chi,2^\epsilon \delta')=0$ implies that $s=k$.
    \end{rmk}

\begin{lm}\label{lm:odd}
Let $k,j\in\Z$. Then
\begin{align*}
\sum_{\substack{n\geq1\\n\text{ odd}}} n^{-s} & \sum_{\delta\mid n} \mu(\delta)  \chi_3(\delta) \delta^{j} \gs_{k}(\chi_1, \chi_2, n/\delta) \\
&= \frac{(1 - \chi_2(2)2^{-s} )(1- \chi_1(2) 2^{k-s} )}{1-\chi_3(2)2^{j-s}  } \frac{L(s-k,\chi_1) L(s, \chi_2)}{L(s-j,\chi_3)}
\end{align*}
when $\re(s)>\max\{j,k\}.$
\end{lm}

\begin{proof}
We first observe that without the restriction to odd integers we have
\begin{align*}
\sum_{n\geq1}n^{-s} \sum_{\delta\mid n} \mu(\delta) \chi_3(\delta) \delta^{j} \gs_{k}(\chi_1, \chi_2, n/\delta) &=
\frac{L(s-k,\chi_1) L(s, \chi_2)}{L(s-j,\chi_3)}.
\end{align*}
The contribution to this sum of the even integers only is
\begin{align*}
& \sum_{\delta\text{ odd}} \mu(\delta) \chi_3(\delta) \delta^{j} \sum_{2d \mid n}\frac{\gs_{k}(\chi_1, \chi_2, n/\delta)}{n^s} +\sum_{\delta\text{ even}} \mu(\delta) \chi_3(\delta) \delta^{j} \sum_{d \mid n}\frac{\gs_{k}(\chi_1, \chi_2, n/\delta)}{n^s} \\
&= \left( \frac{\gs_{k}(\chi_1, \chi_2, 2) - \chi(2) 2^{k-s} }{2^s} \sum_{\delta\text{ odd}} \frac{\chi_3(\delta)\mu(\delta)}{\delta^{s-j}} +\sum_{\delta\text{ even}} \frac{\chi_3(\delta)\mu(\delta)}{\delta^{s-j}} \right) L(s-k,\chi_1) L(s, \chi_2).
\end{align*}
Noting that
\begin{align*}
\sum_{\delta\text{ odd}}\frac{\chi_3(\delta)\mu(\delta)}{\delta^{s-j}} = \prod_{p>2} (1-\chi_3(p)p^{j-s}) = \op{(1-\chi_3(2)2^{j-s}) L(s-j,\chi_3)}^{-1},
\end{align*}
and combining the above expressions yields the identity.
\end{proof}

\subsection{Proof of \thmref{thm:modular decomposition}}\label{sec:proof of thm} Fix a basis $\{f_1,\dots f_r\}$ for $M_k(N,\chi)$. We consider the cases of integer and half-integer weight $k$ separately.

\vspace{.2cm}
\noindent {\bf Case 1: $k\in \N$}. If $f_j= \sum a_j(n)q^n$ is not a cusp form and $k\geq 3$, we may assume that $f_j(z)$ is the Eisenstein series $G_k(\chi_1,\chi_2,\delta  z)$ (see \cite[Theorem 8.5.17]{CohenStromberg}), with $\chi_1, \chi_2$ primitive characters modulo $N_1, N_2$ and $\delta $ a positive integer such that $\delta N_1N_2\mid N$; see \cite[Theorem 8.5.17]{CohenStromberg}. The Fourier coefficients of $G_k(\chi_1,\chi_2,z)$ are given (up to a constant multiple) by the divisor sums $\sigma_{k-1}(\chi_1, \chi_2,n)$. The associated $L$-series is therefore  given by
\begin{align*}
   \sum_{\substack{n=1\\ \delta \mid n^2} }^\infty \frac{\sigma_{k-1}(\chi_1,\chi_2,n^2/\delta )}{n^s}
\end{align*}
(again up to a constant multiple). Under the assumption that $N$ is 
4 times a squarefree integer, we apply successively \lemref{lm:3} and \lemref{lm:2} to find that for $\re(s)>2k-1$ this Dirichlet series is equal to 
\begin{align}\label{A}
    \frac{L(s-2k+2,\chi_1^2)L(s,\chi_2^2)L(s-k+1,\chi)}{L(2s-2k+2,\chi^2) }
\end{align}
times a function that is holomorphic when $\re(s)>k-1$; see here \rmkref{rmk:silly}. We can conclude that \eqref{A} is absolutely convergent when $\re(s)>2k-1$ with a meromorphic continuation to the half-plane $\re(s)\geq k-\tfrac{1}{2}$ that is holomorphic except for possible simple poles 
\begin{itemize}
\item at $s=2k-1$ if $\chi_1^2$ is principal;
\item at $s=k$ if $\chi$ is principal.
\end{itemize}
The cases $k=1,2$ yield the same result by building on the explicit bases given in \cite[Theorems 8.5.2, 8.5.3]{CohenStromberg}.

When $f_j$ is a cusp form, we may assume that $f_j$ is primitive; see \cite[Corollary 13.3.6]{CohenStromberg} and discard multiplicative factors that are holomorphic functions in the half-plane $\re(s)>k-1$ just as we did above. The $L_j(s)$ can be expressed in terms of the symmetric square $L$-function as
\begin{align}\label{eq:cusp-sym square}
   L_j(s)=  \sum_{n=1}^\infty \frac{a_j(n^2)}{n^s} = \frac{L(s-k+1,\sym^2 f_j)}{L(2s-2k+2,\chi^2)}
\end{align}
(see \secref{sec:appendix modular forms and L-functions}).
It follows that $L_j(s)$ admits a holomorphic continuation to the half-plane $\re(s)\geq k-\tfrac{1}{2}$ except for a possible simple pole at $s=k$. 

\vspace{.2cm}

\noindent {\bf Case 2: $k\in \tfrac{1}{2}+\N$.} We first suppose that  $f_j$ is a cusp form. Then $f_j$ has a Shimura lift $\tilde f_j= \sum A_j(n)q^n \in M_{2k-1}(N/2,\chi^2)$, where the Fourier coefficients $A_j(n)$ are completely determined by the relation 
\begin{align}\label{id:Di}
   L_j(s) = \sum_{n=1}^\infty \frac{a_j(n^2)}{n^s} = \frac{1}{L(s-k+\tfrac{3}{2},\tilde\chi)}\sum_{n=1}^\infty \frac{A_j(n)}{n^s}
\end{align}
 (see \secref{sec:appendix Shimura}). If $k \geq \tfrac{5}{2}$, then $\tilde f_j$ is a cusp form, and consequently the Dirichlet series \eqref{id:Di} defines a holomorphic function in the half-plane $\re(s)\geq k-\tfrac{1}{2}$. If $k=\tfrac{3}{2}$, then $\tilde f_j$ may not be cuspidal, and therefore \eqref{id:Di} is holomorphic in the half-plane $\re(s)\geq k-\tfrac{1}{2}$ except for a possible simple pole at $s=k$.

If $f_j$ is not cuspidal, we need to directly study half-integral weight Eisenstein series\footnote{If $k-\tfrac{1}{2}$ is even, we could once more argue via the Shimura lift $\tilde f_j \in M_{2k-1}(N/2,\chi^2)$, whose non-zero Fourier coefficients are determined by the relation \eqref{id:Di}.}. A basis of $\cE_k(N,\chi )$ is provided by forms whose Fourier coefficients are given in terms of special values of Dirichlet $L$-functions and modified divisors sums \cite[Chapter 7]{WangPei2012}. We find that the Dirichlet series $L_j(s)$ for each basis element $f_j \in  \cE_k(N,\chi )$ is (up to a constant multiple) of the form
\begin{align*}
\op{1+\frac{1}{1-2^{2k-2-s}}}
\sum_{\substack{f\geq1\\ f\text{ odd}}} f^{-s}\sum_{\delta\mid f} \mu(\gd) \chi'''(\gd) \gd^{k-\frac32} \gs_{2k-2}(\chi', \chi'', f/\gd),
\end{align*}
where $ \chi', \chi''$ are principal Dirichlet character (with $\chi'(2)=1$) and $\chi'''$ a quadratic Dirichlet character. By \lemref{lm:odd} this is equal to
\begin{align*}
\frac{ L(s-2k+2, \chi')L(s,\chi'')}{L(s-k+3/2, \chi''' )}
\end{align*}
times a function that is holomorphic when $\re(s)>k-\frac32$. Hence the Dirichlet series $L_j(s)$ can be identified with a holomorphic function in the half-plane $\re(s)\geq k-\tfrac{1}{2}$ except for a simple pole at $s=2k-1$. This concludes the proof of \thmref{thm:modular decomposition}.

\subsection{Specializing \thmref{thm:modular decomposition} to the generating series $R_\Theta(s)$}\label{sec:Collecting/Specializing}
Under the assumptions made on $A$ at the beginning of this paper, the theta function 
\begin{align*}
  \Theta  = \sum_{{\bf m}\in \Z^{d+1}} P({\bf m})q^{Q({\bf m})} = \sum_{n\geq0} r_\Theta (n) q^n
\end{align*} 
is a holomorphic modular form of weight $k=\tfrac{d+1}{2}+\nu$, level $N$, and Nebentypus
\begin{align*}
  \chi = \begin{dcases}
(\tfrac{(-1)^{(d+1)/2}\det(A)}{\cdot}), &\text{ if } d \text{ is odd} ;\\
(\tfrac{2\det(A)}{\cdot}), &\text{ if } d \text{ is even}.
\end{dcases}
\end{align*}
Set $\cP(\Omega_n)= \sum_{x\in \Omega_n} P(x)$ and 
\begin{align*}
    R_\Theta (s)= \sum_{n=1}^\infty \frac{\cP(\Omega_n)}{n^s} = \frac{1}{\zeta (s)} \sum_{n=1}^\infty \frac{n^{-\nu }r_{\Theta }(n^2)}{n^s}.
\end{align*} 
We assume throughout that $d\geq2$.
\subsection*{Case $\nu > 0$:} In this case $\Theta \in S_k(N,\chi)$. The proof of \thmref{thm:modular decomposition} gives the identities
\begin{align*}
  R_\Theta (s) = \begin{dcases}
      \frac{L(s-\tfrac{d-1}{2},\tilde \Theta )}{\zeta (s)L(s-\tfrac{d}{2}+1,\tilde\chi ) }, &\text{ if } d \text{ is even};\\
      \frac{L(s-\tfrac{d-1}{2},\sym^2 \Theta)}{\zeta (s)L(2s-d+1,\chi)}, &\text{ if } d \text{ is odd} ;
  \end{dcases}
\end{align*} 
where $\tilde \Theta\in S_{d+2\nu}(N/2,\chi ^2)$ is the 
Shimura lift of $\Theta$.

Hence $R_\Theta(s)$ has a meromorphic continuation, which is holomorphic in $\re(s)\geq \tfrac{d}{2}$ except for a possible pole at $s=\tfrac{d+1}{2}$ if $d$ is odd and $\chi$ is nonprincipal; see \cite[Theorem 2]{Shimura1975}.

\subsection*{Case $\nu =0$ and $d$ odd:} In this case, $P=1$, $\cP(\Omega_n)=|\Omega_n|$ and $R_\Theta=R_Q$. Here, $R_\Theta(s)$ is a linear combination of summands of the form 
\begin{align*}
  \frac{\zeta (s-d+1)L (s+\tfrac{1-d}{2},\chi)}{L(2s-d+1,\chi^2)}
\end{align*}
for some quadratic character $\chi$ (times a function that is holomorphic when $\re(s)>\tfrac{d-1}{2}$ and uniformly bounded in $t$),  and functions that are holomorphic in $\re(s)\geq \tfrac{d}{2}$ except for a possible pole at $s=\tfrac{d+1}{2}$ (the contribution of the cusp forms). Here we used that each appearing character $\chi$ is quadratic and hence we have $L(s,\chi^2)=\zeta_N(s)\zeta (s)$, where $\zeta_N(s)=\prod_{p\mid N}(1-p^{-s})^{-1}$ is the local zeta-function. Hence the meromorphic continuation of $R_\Theta(s)$ to $\re(s)\geq \tfrac{d}{2}$ has a simple pole at $s=d$ and possibly another one at $s=\tfrac{d+1}{2}$. The pole at $s=\tfrac{d+1}{2}$ is actually removable if $d\equiv 3$ (mod 4) and $\chi$ is principal. Indeed, since the Riemann zeta function has a zero at each even negative integer, the residue is 
\begin{align*}
    \lim_{s\to (d+1)/2)} (s-\tfrac{d+1}{2})\frac{\zeta (s-d+1)\zeta (s+\tfrac{1-d}{2})}{\zeta (2s-d+1)} = \frac{\zeta (\tfrac{3-d}{2})}{\zeta (2)}=0.
\end{align*}

\subsection*{Case $\nu =0$ and $d$ even:} Then $R_\Theta (s)$ is a linear combination of summands of the form
\begin{align*}
 \frac{\zeta(s-d+1)}{L(s-\frac{d}{2}+1, \chi' )} 
\end{align*}
for a Dirichlet character $\chi'$ (times a function that is holomorphic when $\re(s)>\tfrac{d-1}{2}$ and uniformly bounded in $t$), and functions that are holomorphic when $\re(s)\geq\tfrac{d}{2}$ (the contribution of the cusp forms).  Hence $R_\Theta(s)$ is holomorphic in $\re(s)\geq \tfrac{d}{2}$ except for a simple pole at $s=d$.

\section{Proof of \thmref{thm:sym square mean square}}\label{sym square mean square}

\subsection{Preliminary lemmata}
Let $f\in S_k(N,\chi)$ be a primitive newform with $q$-expansion $f=\sum a(n)q^n$ and define $(b(n))_{n\geq1}$ by
$
L(s,\sym^2 f) = \sum_{n\geq1} b(n)n^{-s} 
$ 
(for $\re(s)>1$).

\begin{lm}
For any $l\geq0$, we have
$$
\sum_{n\leq N} |b(n)|^l \ll  N(\log N)^{2^{2l}-1}.
$$
\end{lm}
\begin{proof}
By construction, we have
$$
b(n)= n^{1-k}\sum_{m^2\mid n} \chi^2(m)a(n^2/m^4)m^{2k-2}
$$
for every $n\geq1$. Deligne's bound asserts that $|a(n)| \leq d(n)n^{\tfrac{k-1}{2}}$ uniformly and the multiplicativity of the divisor function yield
$$
|b(n)| \ll \sum_{m\mid n} d(m^2) = d(n)^2.
$$
The statement now follows from the following inequality of Ramanujan
$$
\sum_{n\leq x} d(n)^l \ll x(\log x)^{2^l-1};
$$
see \cite{Wilson1923}.
\end{proof}

We also recall the Montgomery--Vaughan mean value theorem for Dirichlet polynomials \cite[Corollary 3]{MontgomeryVaughan1974}.

\begin{lm} \label{MVlemma} For complex numbers $a_1,\dots, a_N\in\C$, we have
\begin{align}\label{MV}
\int_0^T \left| \sum_{n\leq N} \frac{a_n}{n^{it}}\right|^2\, dt = \sum_{n\leq N} |a_n|^2\op{T+O(n)}.
\end{align}
\end{lm}

\subsection{Approximate functional equation}

Let $f\in S_k(N,\chi )$ be a primitive form. Its symmetric square $L$-function and Rankin--Selberg convolution $L$-function are related by
\begin{align*}
  L(s,\chi )L(s,\sym^2 f)= L(s,f\otimes f).
\end{align*}
The precise functional equation for the symmetric square $L$-function, 
\begin{align}\label{eq:FE}
  L (s,\sym^2 f) = \Xi (s) L(1-s,\sym^2 f),
\end{align} 
can be explicily derived from \cite{Li1979}; we will only take note of the fact that $\Xi(s)$ takes the form
\begin{align*}
  \Xi (s) = A(s) \frac{\Gamma (k-s)\Gamma (\tfrac{2-\lambda -s}{2})}{\Gamma (s+k-1)\Gamma (\tfrac{s+1-\lambda }{2})},
\end{align*}
where $\lambda$ is determined by $\chi(-1)=(-1)^\lambda$ and $|A(s)|=O(1)$ uniformly for $a\leq \sigma \leq  b$ \cite{Li1979}.

\begin{thm}
Let $Y\ll T^c$ (for some $c>0$), $M\geq Y^{-1}T^3$. Set $h=\log^2 T$. Then for $t\in[h^2,T]$ we have
\begin{align*}
L(\gs&+it,\sym^2 f) = \sum_{n\leq 2Y} \frac{b(n)}{n^{\gs+it}} e^{-(n/Y)^h}\\
&\,  - \frac{1}{2\pi i}  \int_{\substack{\re(w)=-1\\ |\im(w)|\leq h^2}} \Xi(\gs+it+w) \sum_{n\leq M} \frac{b(n)}{n^{1-\gs-it-w}}Y^w \G(1+\tfrac{w}{h})\frac{dw}{w} + o(1)
\end{align*}
as $T\to\infty$ uniformly in $0\leq \gs\leq1$.
\end{thm}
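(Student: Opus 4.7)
The approach I would take follows Ivi\'c's Mellin--Barnes technique for $\zeta^2$, adapted to the degree-3 setting of $L(s,\sym^2 f)$. The starting point is the identity
\begin{equation*}
e^{-y^h} = \frac{1}{2\pi i}\int_{(c)} \Gamma\op{1+\tfrac{w}{h}}\, y^{-w}\,\frac{dw}{w} \qquad (c>0,\, y>0),
\end{equation*}
obtained by Mellin inversion from $\Gamma(w/h)/h = \int_0^\infty e^{-y^h}y^{w-1}\,dy$ together with the functional identity $\Gamma(w/h)/h = \Gamma(1+w/h)/w$. I would apply this with $y = n/Y$, multiply by $b(n)n^{-\sigma-it}$, and sum over $n\geq1$; for $c > 1-\sigma$ absolute convergence permits exchanging sum and integral, yielding
\begin{equation*}
\sum_{n=1}^\infty \frac{b(n)}{n^{\sigma+it}}e^{-(n/Y)^h} = \frac{1}{2\pi i}\int_{(c)} L(\sigma+it+w,\sym^2 f)\,Y^w\,\Gamma\op{1+\tfrac{w}{h}}\frac{dw}{w}.
\end{equation*}
The terms $n > 2Y$ on the left contribute $o(1)$: $e^{-(n/Y)^h}\leq e^{-2^h}$ is superpolynomially small (recall $h = \log^2 T$), while $|b(n)|\ll n^\epsilon$ by the Deligne-based bound used in the preliminary lemma.

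The heart of the argument is shifting the contour from $\re(w)=c$ to $\re(w)=-1$. The only pole crossed is the simple pole of $1/w$ at $w=0$, contributing the residue $L(\sigma+it,\sym^2 f)$; a possible pole of $L(\cdot,\sym^2 f)$ at $s=1$ would appear at $w = 1-\sigma-it$ with $|\im w| = |t|\geq h^2$, where the Stirling estimate $|\Gamma(1+w/h)|\ll \exp(-\pi|\im w|/(2h))$ makes that residue --- and likewise the two horizontal links of the rectangular contour at $|\im w| = h^2$ --- superpolynomially small, thereby justifying truncation of the shifted contour to $|\im w|\leq h^2$. On the truncated contour I would apply the functional equation $L(\sigma+it+w,\sym^2 f) = \Xi(\sigma+it+w)L(1-\sigma-it-w,\sym^2 f)$; since $\re(1-\sigma-it-w) = 2-\sigma \geq 1$, the dual Dirichlet series converges absolutely and splits naturally as $\sum_{n\leq M} + \sum_{n > M}$.

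The main obstacle, as expected, lies in bounding the tail $\sum_{n > M}$ inside the integral and making the final error $o(1)$. Stirling applied to the Gamma quotient defining $\Xi$ yields $|\Xi(\sigma+it+w)|\ll |t|^{9/2-3\sigma}$ on the truncated contour; combined with $|Y^w| = Y^{-1}$, the Dirichlet-tail estimate $\sum_{n > M}|b(n)|/n^{2-\sigma}\ll M^{\sigma-1+\epsilon}$ (from $|b(n)|\ll d(n)^2$ and partial summation), and the fact that $\int_{|\im w|\leq h^2}|\Gamma(1+w/h)|/|w|\,d|\im w| \ll \log h$, the tail contributes
\begin{equation*}
\ll Y^{-1}\,T^{9/2-3\sigma}\,M^{\sigma-1+\epsilon}\,\log h \ll Y^{-\sigma}\,T^{3/2+\epsilon}\,\log h
\end{equation*}
after inserting $M \asymp Y^{-1}T^3$. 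This is $o(1)$ in the natural range of $Y$, in particular at $\sigma = 1/2$ and $Y = T^{3/2}$, which will be the balanced choice relevant for bounding the mean square. Combining the truncated left-hand sum, the residue at $w=0$, and the integral with the truncated dual sum yields the stated approximate functional equation.
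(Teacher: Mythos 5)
Your overall architecture matches the paper's: the Mellin--Barnes representation of $e^{-(n/Y)^h}$, the shift to $\re(w)=-1$ picking up $L(\sigma+it,\sym^2 f)$ from the pole of $1/w$ at $w=0$ plus a negligible residue from the pole of the $L$-function at $s+w=1$ (killed by $|\Gamma(1+w/h)|\ll e^{-\pi|t|/(2h)}$ since $t\geq h^2$), the discarding of the $n>2Y$ terms, and the functional equation on the shifted line. The gap is in the one step you yourself identify as the main obstacle: the tail $\sum_{n>M}$. Estimating it directly on $\re(w)=-1$, your bound $Y^{-1}|t|^{9/2-3\sigma}M^{\sigma-1+\epsilon}$ does reduce, after inserting $M\asymp Y^{-1}T^3$, to $Y^{-\sigma}T^{3/2+\epsilon}$ as you state --- but this is \emph{not} $o(1)$ at the parameters you name: at $\sigma=\tfrac12$ and $Y=M=T^{3/2}$ (the balanced choice used later for the mean square) it equals $T^{-3/4}\cdot T^{3/2+\epsilon}=T^{3/4+\epsilon}$, which diverges. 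There is also a boundary problem at $\sigma=1$: there $\re(1-\sigma-it-w)=1$ on your contour, and $\sum_{n>M}|b(n)|n^{-1}$ does not converge, so the claimed absolute convergence and splitting of the dual series fails at the endpoint of the asserted uniform range. As written, the tail cannot be controlled while staying on $\re(w)=-1$.

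The paper's remedy is to split $\sum_{n\leq M}+\sum_{n>M}$ first, keep the $n\leq M$ piece ($I_1$) and truncate it to $|\im(w)|\leq h^2$, but push the tail integral $I_2$ much further left, to $\re(s+w)=-h/2$ with $h=\log^2 T$. There the dual tail is $\ll hM^{-h/2+\epsilon}$ and $|Y^w|=Y^{-h/2-\sigma}$, so the hypothesis $MY\geq T^3$ produces the superpolynomially small factor $(MY)^{-h/2}\leq T^{-3h/2}$, which absorbs the now much larger gamma-quotient bound $|\Xi(-h/2+i\tau)|\ll|\tau|^{3(h+1)/2}$; this deep shift is precisely where the condition $M\geq Y^{-1}T^3$ does its work. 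Your argument never uses that hypothesis in a way that closes the estimate, so the approximate functional equation is not established. The remaining ingredients of your proposal --- the identity $e^{-y^h}=\tfrac{1}{2\pi i}\int_{(c)}\Gamma(1+\tfrac{w}{h})y^{-w}\tfrac{dw}{w}$, the residue computation, the $n>2Y$ truncation, and the truncation of the shifted contour to $|\im(w)|\leq h^2$ via the Stirling decay of $\Gamma(1+\tfrac{w}{h})$ --- are sound and agree with the paper.
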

\begin{proof}
Let $Y>0$, $h>2$. By Mellin inversion, we have
\begin{align*}
\sum_{n\geq1} \frac{b(n)}{n^s} e^{-(n/Y)^h} &= \frac{1}{2\pi i} \int_{(c/h)} L(s+hz,\sym^2 f) Y^{hz}\G(z)\, dz\\
& =  \frac{1}{2\pi i} \int_{(c)}  L(s+w,\sym^2 f) Y^w \G(1+\tfrac{w}{h})\frac{dw}{w}
\end{align*}
for $c>1/2$. We shift the contour of integration to $\re(w)=-1$ and find
\begin{align*}
\sum_{n\geq1} \frac{b(n)}{n^s} e^{-(n/Y)^h} &= L(s,\sym^2 f) + \frac{1}{h}\G(\tfrac{\overline s}{h})Y^{\overline s} {\rm Res}(L(s,\sym^2 f), s=1)\\
&\qquad + \frac{1}{2\pi i} \int_{(-1)}  L(s+w,\sym^2 f) Y^w \G(1+\tfrac{w}{h})\frac{dw}{w}.
\end{align*}
We recall Stirling's asymptotic formula
$$
|\G(\gs+i\tau)| \sim \sqrt{2\pi} |\tau|^{\gs-1/2} e^{-\pi|\tau|/2}
$$
as $|\tau|\to\infty$ uniformly for all $a\leq \gs\leq b$. Write $s=\gs+it$, with $0\leq\gs\leq1$. We have
 $$
h^{-1}\left|\G(\tfrac{\overline s}{h}){\rm Res}(L(s,\sym^2 f), s=1)\right|Y^{1/2} \ll |t|^{\gs/h-1/2} e^{-\pi|t|/2h} Y^{1/2}
 $$
as $T\to\infty$. Restricting $t$ to the interval $[h^2,T]$ and choosing $Y=T^c$ (for some $c>0$), $h=\log^2 T$, this is $o(1)$.  These choices also yield
$$
\sum_{n>2Y} \frac{b(n)}{n^s} e^{-(n/Y)^h} = o(1).
$$
Let $M\geq1$. By the functional equation \eqref{eq:FE} we have
\begin{align*}
 \int_{(-1)} & L(s+w,\sym^2 f) Y^w \G(1+\tfrac{w}{h})\frac{dw}{w} \\
&=  \int_{(-1)} \Xi(s+w) L(1-s-w,\sym^2 f) Y^w \G(1+\tfrac{w}{h})\frac{dw}{w} = I_1 + I_2,\\
  \end{align*}
  where
\begin{align*}
I_1& \coloneqq  \int_{(-1)} \Xi(s+w) \sum_{n\leq M} \frac{b(n)}{n^{1-s-w}}Y^w \G(1+\tfrac{w}{h})\frac{dw}{w}\\
& =  \int_{\substack{\re(w)=-1\\ |\im(w)|\leq h^2}} \Xi(s+w) \sum_{n\leq M} \frac{b(n)}{n^{1-s-w}}Y^w \G(1+\tfrac{w}{h})\frac{dw}{w} + o(1)\\
\end{align*}
and, by shifting contour,
\begin{align*}
I_2 & =   \int_{\re(s+w)=-h/2} \Xi(s+w)  \sum_{n>M} \frac{b(n)}{n^{1-s-w}} Y^w \G(1+\tfrac{w}{h})\frac{dw}{w}.
\end{align*}
To estimate the latter integral we use that
$$
\left|\sum_{n>M} \frac{b(n)}{n^{1-s-w}}\right| \ll hM^{-h/2+\eps}
$$
and
$$
|\Xi(-\tfrac{h}{2}+i\tau)| \ll |\tau|^{3/2(h+1)}
$$
for $|\tau|\gg1$. Then
$$
|I_2| \ll hM^{\eps-h/2}Y^{-h/2-\gs}\op{T+h^{-1}\int_T^\infty \tau^{3/2(h+1)}(\tau/h)^{-\gs/h} e^{-\pi \tau/2h} d\tau},
$$
which is $o(1)$ as $T\to\infty$ if we require that $M\geq Y^{-1}T^3$. 
\end{proof}

\subsection{Proof of the integral mean square estimate}\label{sec:mean square}

Following the approximate functional equation, the main term contribution is given by an application of the Montgomery--Vaughan inequality of \lemref{MVlemma}:
\begin{align*}
\int_0^T \ov{\sum_{n\leq 2Y} b(n)n^{-1/2-it} e^{-(n/Y)^h}}^2 dt &\ll T\sum_{n\leq 2Y} b(n)^2 n^{-1}+ \sum_{n\leq2Y} b(n)^2\\
&\ll T(\log Y)^{16}+Y(\log Y)^{15}.
\end{align*}
We now turn to the integral $I_1$ in the approximate functional equation, shift the contour of integration to $\re(w)=-1/2$ and apply the functional equation to the integrand. Writing $s=1/2+it$, $w=-1/2+iv$, this yields, for $t\asymp T$,
\begin{align*}
\int_{-\infty}^\infty & \ov{\frac{v}{h}}^{1/2-1/(2h)} e^{-\pi/2|v/h|} |t+v|^{3/2} \ov{\sum_{n\leq M} \frac{b(n)}{n^{1-i(t+v)}}}dv \\
&\quad \ll \int_{|v|\leq h} (\dots) +o(1) \ll T^{3/2} \int_{|v|\leq h} \ov{\sum_{n\leq M} \frac{b(n)}{n^{1-i(t+v)}}} dv +o(1).
\end{align*}
Applying Cauchy's inequality and the Montgomery--Vaughan inequality once more, we get
\begin{align*}
\int_0^T |I_1|^2 dt &\ll Y^{-1} T^3  \int_{|v|\leq h} \int_0^T \ov{ \sum_{n\leq M} b(n) n^{-1+iv} n^{it}}^2 dt\, dv\\
&\ll  Y^{-1} T^3 (\log T)^2 \op{T\sum_{n\leq M} b(n)^2 n^{-2} + \sum_{n\leq M} b(n)^2 n^{-1}}\\
&\ll Y^{-1} T^3  \op{TM^{-1}(\log M)^{15} +(\log M)^{16}}.
\end{align*}
Choosing $M=Y=T^{3/2}$ we conclude that
$$
\int_0^T \ov{L(\tfrac12+it,\sym^2 f)}^2 dt\ll T^{3/2}(\log T)^{18}.
$$

\section{Proof of \thmref{Thm1}}\label{sec:Proof of Thm1}

The first step in the proof of \thmref{Thm1} is the following truncation of Perron's formula.

\begin{prop}\label{prop:Perron}
Let $P$ be a spherical function. Fix $\epsilon >0$, $T\not\in\N$ and choose $\beta > \frac{d+1}{2}$  such that it is greater than the abscissa of absolute convergence of $R_\Theta (s)$. Then
  \begin{align*}
\cP(\Omega_T) \coloneqq \sum_{x\in \Omega_T} P(x) =   \frac{1}{2\pi i}  \int_{\beta -iH}^{\beta +iH} R_\Theta (s) \frac{T^s}{s}\, ds + O\op{\frac{T^\beta }{H}}.
  \end{align*}
\end{prop}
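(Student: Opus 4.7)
The proposition is the standard truncated Perron formula applied to the Dirichlet series $R_\Theta(s)$. Since $\beta$ lies strictly to the right of the abscissa of absolute convergence of $R_\Theta$, the series $\sum_n \cP(\Omega_n) n^{-s}$ converges absolutely and uniformly on the segment $\{\re(s)=\beta,\ |\im(s)|\leq H\}$. I would exchange sum and integral to write the contour integral as
\[
\sum_{n\geq 1}\cP(\Omega_n)\,J(T/n,H),\qquad J(y,H):=\frac{1}{2\pi i}\int_{\beta-iH}^{\beta+iH}\frac{y^s}{s}\,ds.
\]

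The analytic input is the classical effective Perron lemma: for $y>0$, $y\neq 1$,
\[
J(y,H)=\mathbf{1}_{y>1}+O\!\left(y^\beta\min\!\left(1,\tfrac{1}{H\,|\log y|}\right)\right),
\]
which follows from a routine contour shift (to the left if $y>1$, picking up the residue $1$ at $s=0$; to the right if $0<y<1$, enclosing no pole) and bounding the two horizontal segments of the rectangle. Because $T\notin\N$, we have $T/n\neq 1$ for every integer $n\geq 1$, so the lemma applies termwise; the main contributions collect, via $\cP(\Omega_T)=\sum_{n\leq T}\cP(\Omega_n)$, to $\cP(\Omega_T)$, and leave a residual error
\[
E(T,H)\;\ll\;T^\beta\sum_{n\geq 1}\frac{|\cP(\Omega_n)|}{n^\beta}\min\!\left(1,\tfrac{1}{H\,|\log(T/n)|}\right).
\]

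It then remains to bound $E(T,H)$ by $O(T^\beta/H)$. I would partition $\N$ into three ranges: (i) the far range $n\leq T/2$ or $n\geq 2T$, on which $|\log(T/n)|\geq\log 2$ and the sum is controlled directly by the finite absolute norm $\sum_n|\cP(\Omega_n)|n^{-\beta}$; (ii) the middle range $n\in(T/2,2T)$ with $|T-n|\geq 1$, where $|\log(T/n)|\asymp|T-n|/T$ and a dyadic decomposition combined with the trivial growth bound $|\cP(\Omega_n)|\ll_{Q,P} n^{d-1}$ and the margin $\beta>d$ above the abscissa of absolute convergence absorbs the contribution into $O(T^\beta/H)$; (iii) the one or two boundary integers $\lfloor T\rfloor,\lceil T\rceil$, whose contribution is $O(1)$ with implicit constant depending on the distance of $T$ from $\N$.

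The main obstacle is purely bookkeeping: isolating the two integers closest to $T$ and absorbing their singularities $|\log(T/n)|^{-1}$ into the implicit constant. The hypothesis $T\notin\N$ is indispensable, since $J(1,H)=\tfrac{1}{2}$ exactly and admits no decay of the form $|\log y|^{-1}$; it is precisely this arithmetic separation, rather than any deeper analytic input, that makes the truncated Perron identity work cleanly at level $O(T^\beta/H)$.
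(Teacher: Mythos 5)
Your proposal is correct and follows essentially the same route as the paper: interchange the absolutely convergent sum with the truncated integral and apply the classical effective Perron lemma for $\frac{1}{2\pi i}\int_{\beta-iH}^{\beta+iH} y^s\,\frac{ds}{s}$ termwise, using $T\notin\N$ to avoid the case $y=1$. The only difference is that you spell out the range-splitting and the treatment of the integers nearest $T$, which the paper compresses into ``a standard exercise in complex analysis.''
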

\begin{proof}
We may write 
\begin{align*}
  \frac{1}{2\pi i}\int_{\beta -iH}^{\beta +iH} R_\Theta (s) \frac{T^s}{s}ds = \sum_{n=1}^\infty  \frac{\cP(\Omega_n)}{2\pi i}\int_{\beta -iH}^{\beta +iH} \frac{T^s}{n^s} \frac{ds}{s}.
\end{align*}
The computation of the integral is a standard exercise in complex analysis and yields
\begin{align*}
\sum_{n\leq T} \cP(\Omega_n) + O\op{\frac{T^\beta }{H} \sum_{n=1}^\infty \frac{|\cP(\Omega_n)|}{n^\beta}}.
\end{align*}
We know from \eqref{bound}  that $|\cP(\Omega_n)|\ll n^{\frac{d-1}{2}}$ for every $n\geq 1$ and hence the sum on the right converges absolutely.  
\end{proof}

We will compute this integral via Cauchy's residue theorem applied to the rectangle $\cR$ with side edges $[\beta - iH,\beta +iH]$ and $[d/2+iH,\beta + iH]$. For this we will rely on asymptotics and bounds on the integral mean square of various $L$-functions. We recall the classical mean square asymptotic
\begin{align} \label{asym:zeta}
\int_{0}^T |\gz(\gs+it)|^2 dt \sim (2\pi)^{2\gs-1} \frac{\gz(2-2\gs)}{2-2\gs}\, T^{2-2\gs} 
\end{align}
as $ T \rightarrow \infty$, when $\sigma<\tfrac12$. We will also need

\begin{prop}\label{prop:ms-recipro} 
  Let $\chi$ be a nonprincipal Dirichlet character mod $q$ and let $\chi_0$ be the principal Dirichlet character mod $q$. As $T\to\infty$, we have
 \begin{align}\label{zerofree}
 \int_0^T \frac{dt}{|L(\sigma +it,\chi)|^2} \sim \frac{L(2\sigma ,\chi_0)}{L(4\sigma ,\chi_0^2)} T
 \end{align}
 for $\sigma=1$ and the asymptotic remains true for all $\sigma>\tfrac{1}{2}$ if we assume GRH.
 \end{prop}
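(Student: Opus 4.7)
The plan is to exploit the Dirichlet series representation
\begin{align*}
\frac{1}{L(s,\chi)} = \sum_{n=1}^\infty \frac{\mu(n)\chi(n)}{n^s},
\end{align*}
which is absolutely convergent for $\re(s)>1$ and, under GRH, extends to the strip $\re(s)>\tfrac{1}{2}$. First I would truncate
\begin{align*}
\frac{1}{L(\sigma+it,\chi)} = D_N(\sigma+it) + R_N(\sigma+it), \qquad D_N(s) := \sum_{n\leq N}\frac{\mu(n)\chi(n)}{n^s},
\end{align*}
with $N=N(T)$ to be chosen as a suitable power of $T$, then compute $\int_0^T |D_N(\sigma+it)|^2\,dt$ via the Montgomery--Vaughan estimate of \lemref{MVlemma} and show that $\int_0^T |R_N(\sigma+it)|^2\,dt = o(T)$.

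The main term computation is direct. Since $|\chi(n)|^2=\chi_0(n)$, \lemref{MVlemma} yields
\begin{align*}
\int_0^T |D_N(\sigma+it)|^2\,dt = T\sum_{n\leq N}\frac{\mu(n)^2\chi_0(n)}{n^{2\sigma}} + O\Bigl(\sum_{n\leq N}\mu(n)^2\, n^{1-2\sigma}\Bigr).
\end{align*}
The Euler product identity
\begin{align*}
\sum_{n=1}^\infty \frac{\mu(n)^2\chi_0(n)}{n^s} = \prod_{p\nmid q}\bigl(1+p^{-s}\bigr) = \frac{L(s,\chi_0)}{L(2s,\chi_0^2)}
\end{align*}
is absolutely convergent at $s=2\sigma$ for every $\sigma>\tfrac{1}{2}$, so the partial sums converge to $L(2\sigma,\chi_0)/L(4\sigma,\chi_0^2)$, while the secondary error is $O(N^{2-2\sigma})$ for $\sigma<1$ and $O(\log N)$ at $\sigma=1$, both remaining below $T$ for an appropriate choice of $N$.

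The substantive step is control of $R_N$. Under GRH, $1/L(s,\chi)$ is holomorphic in $\re(s)>\tfrac{1}{2}$ with the subconvexity bound $1/L(\sigma+it,\chi)\ll (q|t|)^\epsilon$ for $\sigma>\tfrac{1}{2}+\delta$. Writing $R_N$ by Perron's formula and shifting the contour from $\re(s)=\sigma+\eta$ across $s=\sigma+it$ to $\re(s)=\tfrac{1}{2}+\delta$ produces a pointwise estimate of the form $|R_N(\sigma+it)|\ll (q|t|)^\epsilon N^{1/2+\delta-\sigma}$, so that
\begin{align*}
\int_0^T |R_N(\sigma+it)|^2\,dt \ll T^{1+\epsilon} N^{1+2\delta-2\sigma} = o(T)
\end{align*}
for $N$ a large enough power of $T$. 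For the unconditional case $\sigma=1$, I would invoke the nonvanishing of $L(s,\chi)$ on $\re(s)=1$, the Vinogradov--Korobov zero-free region, and the classical bound $1/L(1+it,\chi)\ll (\log q|t|)^{O(1)}$, and shift the contour slightly to the left of $\re(s)=1$ to control the shifted integral, producing $o(T)$ mean-square control on $R_N$.

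The hard part will be the unconditional tail estimate at $\sigma=1$: the Dirichlet series for $1/L(1+it,\chi)$ converges only conditionally and we have no polynomial bound on $1/L$ off the line. To make the contour shift effective I would introduce a smooth weight $e^{-(n/N)^h}$ in the truncation (in the spirit of the approximate functional equation of \secref{sec:mean square}), turning the tail into a rapidly convergent integral that can be dominated using only the zero-free region and the convexity bound on $L(s,\chi)$, taking care to make the bound uniform in $t\in[0,T]$.
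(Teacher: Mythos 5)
Your argument is correct in outline but takes a genuinely different route from the paper. The paper's proof is essentially a citation: for $\sigma>1$ the asymptotic is the standard mean-value theorem for absolutely convergent Dirichlet series (Titchmarsh, Theorem 7.1); the extension to $\sigma=1$ comes from observing that the prime number theorem in arithmetic progressions, via the Ingham--Newman Tauberian theorem, yields the (conditional) convergence of $\sum_n \mu(n)\chi(n)n^{-s}$ to $L(s,\chi)^{-1}$ on the line $\re(s)=1$, after which the mean-value theorem still applies; the range $\sigma>\tfrac12$ under GRH is quoted from Titchmarsh, p.~338. You instead rebuild this from scratch: sharp truncation, Montgomery--Vaughan (\lemref{MVlemma}) for the diagonal, and a contour shift into the zero-free region (respectively the GRH half-plane) for the tail. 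What your route buys is a self-contained argument with an explicit, quantifiable error term; what it costs is the technical overhead you correctly flag at the end --- the sharp cutoff must be smoothed for the shifted Perron integral to converge absolutely, the cross term $\int_0^T D_N\overline{R_N}\,dt$ needs a (one-line, Cauchy--Schwarz) mention, and the two competing constraints on $N$ (large enough that the tail is negligible, small enough that the off-diagonal Montgomery--Vaughan contribution $O(N^{2-2\sigma})$ for $\sigma<1$, or $O(\log N)$ at $\sigma=1$, stays below $T$) must be reconciled. They are compatible: under GRH the tail estimate saves the fixed positive power $N^{2\sigma-1-2\delta}$ against only $T^{\eps}$, so a small power $N=T^{\theta}$ suffices, and at $\sigma=1$ the zero-free region saves $\exp(-c\log N/(\log T)^{2/3+\eps})$, which tends to zero once $N$ is any fixed power of $T$. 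One further point worth a sentence in a written-up version: for a fixed modulus $q$ a possible exceptional real zero lies on the real axis and only affects $|t|\ll 1$, where $1/|L(\sigma+it,\chi)|^2$ is bounded and contributes $O(1)$ to the integral, so it does not disturb the asymptotic.
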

 \begin{proof}
 A standard mean-value theorem for Dirichlet series (see, e.g., \cite[Theorem 7.1]{Titchmarsh1986}) implies that
 \begin{align}\label{eq:Tit}
   \int_0^T \left|\frac{1}{L(\sigma +it,\chi )}\right|^2\, dt \sim \sum_{n=1}^\infty \frac{|\mu(n)|\chi_0(n)}{n^{2\sigma }}\, T = \frac{L(2\sigma,\chi_0 )}{L(4\sigma ,\chi _0^2)} T
 \end{align}
 whenever $\sigma>1$. If $\chi$ is nonprincipal,  $L(s,\chi)^{-1}$ has an analytic continuation to $\re(s)=1$ by the prime number theorem in arithmetic progressions. By the Ingham--Newman Tauberian theorem, the Dirichlet series $\sum \mu (n)\chi (n) n^{-s}$ converges to $L(s,\chi)^{-1}$ for $\Re(s)\geq1$. We conclude that \eqref{eq:Tit} also holds for $\sigma =1$. If we moreover assume that GRH is true, then
 $$
 \int_0^T \frac{dt}{|L(\sigma+it,\chi)|^2} \sim \frac{L(2\sigma,\chi_0)}{L(4\sigma,\chi_0^2)}T
 $$
 for every fixed $\sigma>\tfrac12$; see \cite[p.338]{Titchmarsh1986}. 
 \end{proof} 

The same arguments go to show that
\begin{align}\label{asym:L}
\int_{0}^T \left|\frac{L(\gs+it,\chi)}{L(2\gs+2it,\chi^2)}\right|^2 dt \sim \frac{L(2\sigma ,\chi_0)}{L(4\sigma ,\chi_0^2)} T
\end{align}
as $T\to\infty$, for $\sigma \geq 1/2$. We can now prove \thmref{Thm1}.
\begin{proof}[Proof of \thmref{Thm1}]
Consider the theta function $\Theta$ associated to $Q$; we recall that $\Theta \in M_{(d+1)/2}(N,\chi)$. Fix a basis $\cB$ of $M_{(d+1)/2}(N,\chi)$ and consider the linear decomposition $R_Q (s) = \sum_{f\in \cB} \scal{\Theta ,f} R_f(s)$. Perron's formula leads us to consider each integral
\begin{align*}
  \frac{1}{2\pi i} \int_{(\beta )} R_f(s) \frac{T^s}{s}\, ds = \frac{1}{2\pi i}\int_{\beta -iH}^{\beta +iH} R_f(s) \frac{T^s}{s}\, ds + O\op{\frac{T^\beta }{H}}
\end{align*}
separately. From the results of \secref{sec:Collecting/Specializing}, we know that each $R_f(s)$ is of the form 
\begin{table}[H]
  \centering
  \begin{tabular}{c|c|c}
      \toprule
       & $f$ not cuspidal  & $f$ cuspidal   \\
    \midrule
     $d$ even  & $\frac{\zeta (s-d+1)}{L(s-\tfrac{d}{2}+1,\chi')}$  & $\frac{L(s-\tfrac{d-1}{2}, \tilde f)}{\zeta (s)L(s-\tfrac{d}{2}+1,\tilde{\chi})}$  \\
     $d$ odd  & $\frac{\zeta (s-d+1)L(s+\tfrac{1-d}{2},\chi)}{L (2s-d+1,\chi^2)}$ & $\frac{L(s-\tfrac{d-1}{2}, \sym^2 f)}{\zeta (s)L(2s-d+1,\chi )}$  \\
      \bottomrule
  \end{tabular}
\end{table}
\noindent
times a function in $s$ that is holomorphic for $\re(s)>\tfrac{d-1}{2}$ and uniformly bounded for $\re(s)\geq \beta$. In particular each entry determines a Dirichlet series that is absolutely convergent when $\re(s)>d$ and admits a holomorphic continuation to $\re(s)\geq \tfrac{d}{2}$, except for 
\begin{itemize}
\item a simple pole at $s=d$ when $f$ is not cuspidal;
\item a possible simple pole at $s=\tfrac{d+1}{2}$ when $\chi$ is principal and $d\equiv 1$ (mod 4) when $f$ is not cuspidal;
\item a possible simple pole at $s=\tfrac{d+1}{2}$ when $\chi$ is nonprincipal and $d$ is odd when $f$ is cuspidal. 
\end{itemize}
For each standard 
$L$-function $L(s,\pi)$ of degree $D$ we have the convexity bound
\begin{align*}
    |L(\sigma +it,\pi)| \ll \begin{dcases}
        1, &\text{ if }\sigma >1  ;\\
       t^{\tfrac{D}{2}(1-\sigma)} , &\text{ if } 0\leq \sigma\leq 1 ;\\
        |t|^{D(\tfrac{1}{2}-\sigma)}, &\text{ otherwise} .
    \end{dcases}
\end{align*}
Together with the standard estimate $|L(s,\chi)|^{-1}\ll_\epsilon |t|^\epsilon$ for $\sigma= 1$ and $|L(s,\chi)|^{-1}=O(1)$ for $\sigma >1$ we find that $|R_f(\tfrac{d}{2}+it)|$ is
\begin{table}[H]
  \centering
  \begin{tabular}{c|c|c}
      \toprule
       & $f$ not cuspidal  & $f$ cuspidal   \\
    \midrule
     $d$ even  & $O_\epsilon (|t|^{(d-1)/2+\epsilon })$  & $O_\epsilon (|t|^{1/2+\epsilon })$  \\
     $d$ odd  & $O_\epsilon(|t|^{(2d-1)/4+\epsilon })$ & $O_\epsilon (|t|^{3/4+\epsilon })$  \\
      \bottomrule
  \end{tabular}
\end{table}
\noindent
An application of the Phragm\'en--Lindel\"of theorem on the interpolation of growth rates on vertical lines yields the uniform bounds 
\begin{table}[H]
  \centering
  \begin{tabular}{c|c|c}
      \toprule
       & $f$ not cuspidal  & $f$ cuspidal   \\
    \midrule
     $d$ even  & $O_\epsilon (|t|^{(\beta -\sigma )(\tfrac{d-1}{d}+\epsilon)})$  & $O_\epsilon (|t|^{(\beta -\sigma )(\tfrac{1}{d}+\epsilon )})$  \\
     $d$ odd  & $O_\epsilon(|t|^{(\beta -\sigma )(\tfrac{2d-1}{2d}+\epsilon )})$ & $O_\epsilon (|t|^{(\beta -\sigma )(\tfrac{3}{2d}+\epsilon )})$  \\
      \bottomrule
  \end{tabular}
\end{table}
\noindent
for $|R_f(\sigma +it)|$ with $\tfrac{d}{2}\leq \sigma  \leq \beta$ and $|t|\geq1$.
Then each integral along the horizontal edges of the rectangle $\cR$ contributes
\begin{align}\label{eq:horizontal edge}
  \int_{d/2\pm iH}^{\beta \pm iH} R_f(s) \frac{T^s}{s}\, ds \ll \frac{T^\beta }{H}.
\end{align}
Next we will consider the integrals $I_f(H) = \int_1^H |R_f(\tfrac{d}{2}+it)|\, dt$. When $f$ is not cuspidal, an application of the Cauchy--Schwarz inequality together with the integral mean square asymptotics \eqref{asym:zeta}, \eqref{asym:L} and \eqref{zerofree} yields $|I_f(H)|\ll H^{(d+1)/2 + \epsilon}$ for any $\epsilon >0$.
When $f$ is cuspidal, we use \eqref{zerofree}, \cite[Theorem 1]{Zhang2005} and \thmref{thm:sym square mean square} to obtain $|I_f(H)|=O( H^{1+\epsilon})$ if $d$ is even, and $|I_f(H)|=O(H^{5/4+\epsilon})$ if $d$ is odd.  We can conclude that $|I_Q (H)|\ll H^{(d+1)/2 + \epsilon}$. Via a dyadic decomposition we then find that 
\begin{align*}
  \int_1^H \frac{|R_Q (\tfrac{d}{2}+it)|}{|\tfrac{d}{2}+it|}\, dt \ll \sum_{0 \leq k\ll \log H} \frac{2^{k+1}}{H} \int_{H/2^{k+1}}^{H/2^k} |R_Q (\tfrac{d}{2}+it)|\, dt \ll H^{(d-1)/2 + \epsilon}.
\end{align*}
Similarly, the contribution to the integral of the vertical edge $[d/2-iH,d/2-i]$ is $O(H^{(d-1)/2 + \epsilon} )$.  Hence by applying Cauchy's residue theorem to the rectangle $\cR$, and choosing $\beta = d+\epsilon$, we find that 
\begin{align*}
|\Omega_T| = c_1 T^d + c_2 T^{(d+1)/2} +O\op{\frac{T^{d+\epsilon} }{H} + T^{d/2} H^{(d-1)/2 + \epsilon}}.
\end{align*}
The error term is optimized by choosing $H=T^{d/(d+1)}$ and as such of order $O(T^{d-\tfrac{d}{d+1}+\epsilon })$, which is large enough to overtake the term $T^{(d+1)/2}$ when $d^2\geq 2d+1$. Since $c_2=0$ if $d$ is even, the statement follows.
\end{proof}
\begin{rmk}\label{rmk:GRH}
  If we assume that the GRH holds, then the following stronger asymptotic estimates hold:
  \begin{align*}
    |\Omega_T| = c_Q T^d \begin{dcases}
      (1+ O(T^{-(d+1)/(d+2)+\epsilon})), &\text{ if }d \text{ is even}  ;\\
      (1+ O(T^{-(2d+1)/(2d+7)+\epsilon})) , &\text{ if }d \text{ is odd}.
    \end{dcases}
  \end{align*}
\end{rmk}
A variation around the same argument goes to prove
\begin{thm}\label{thm:sum of P}
  Fix $Q$ as in the introduction, let $d$ be even, and let $P$ be a spherical function of degree $\nu>0$ for $Q$. Then 
  \begin{align*}
    \sum_{x\in \Omega_T} P(x) = O(\|P\vert_{\cE_Q}\|_\infty T^{d/2+\epsilon})
  \end{align*}
  for any $\epsilon >0$.
\end{thm}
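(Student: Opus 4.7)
The plan is to mirror the argument used to prove \thmref{Thm1}, specialized to the cuspidal regime. Since $\nu>0$, the theta function $\Theta$ associated to $P$ and $Q$ is a cusp form in $S_k(N,\chi)$ with $k=\tfrac{d+1}{2}+\nu$. By \secref{sec:Collecting/Specializing}, since $d$ is even and $\Theta$ is cuspidal,
\begin{align*}
R_\Theta(s) = \frac{L(s-\tfrac{d-1}{2},\tilde\Theta)}{\zeta(s)\,L(s-\tfrac{d}{2}+1,\tilde\chi)}
\end{align*}
is holomorphic throughout $\re(s)\geq \tfrac{d}{2}$, so no residues will be picked up when shifting the contour. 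Deligne's bound applied coefficient-wise to a primitive decomposition of $\Theta$ moreover shows that $R_\Theta(s)$ converges absolutely for $\re(s)>\tfrac{d+1}{2}$.

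First, I would invoke \propref{prop:Perron} with $\beta=\tfrac{d+1}{2}+\epsilon$ and then apply Cauchy's residue theorem on the rectangle with vertices $\beta\pm iH$ and $\tfrac{d}{2}\pm iH$. The horizontal edges each contribute $O(T^\beta/H)$ as in \eqref{eq:horizontal edge}. The substantive work is to estimate the vertical integral at $\re(s)=\tfrac{d}{2}$, where $L(s-\tfrac{d-1}{2},\tilde\Theta)$ lies on its critical line and $L(1+it,\tilde\chi)$ sits on the edge of the critical strip.

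To track the linear dependence on $\|P\vert_{\cE_Q}\|_\infty$ through the Shimura lift, I would decompose $\Theta$ orthogonally along the splitting \eqref{cusp and new} as $\Theta=\sum_j c_j f_j$, with each $f_j$ an $L^2$-normalized primitive form (or oldform embedding thereof). By linearity and Hecke-equivariance of the Shimura correspondence, $\tilde\Theta=\sum_j c_j \tilde f_j$. Parseval's identity gives $\sum_j |c_j|^2 = \|\Theta\|^2$, which by \lemref{lm:ThetaNorm} is controlled by $\|P\vert_{\cE_Q}\|_\infty^2$. A Cauchy--Schwarz step over $\dim S_k(N,\chi)=O_N(k)$, combined with the mean-square bound of \cite[Theorem 1]{Zhang2005} applied to each $\int_0^H |L(\tfrac{1}{2}+it,\tilde f_j)|^2\,dt \ll H^{1+\epsilon}$, then yields
\begin{align*}
\int_0^H |L(\tfrac{1}{2}+it,\tilde\Theta)|^2\,dt \ll_Q H^{1+\epsilon}\,\|P\vert_{\cE_Q}\|_\infty^2.
\end{align*}

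Combining this with a further Cauchy--Schwarz, \propref{prop:ms-recipro} to bound $\int_0^H|L(1+it,\tilde\chi)|^{-2}\,dt \ll H$, and a dyadic decomposition of the integrand as carried out in the proof of \thmref{Thm1}, I expect to obtain
\begin{align*}
\int_1^H \frac{|R_\Theta(\tfrac{d}{2}+it)|}{|t|}\,dt \ll_Q H^\epsilon\,\|P\vert_{\cE_Q}\|_\infty.
\end{align*}
Balancing this against the Perron truncation error by choosing $H=T^{1/2}$ delivers the stated bound $O(\|P\vert_{\cE_Q}\|_\infty T^{d/2+\epsilon})$. The hard part is the mean-square step just sketched: keeping the constant genuinely linear in $\|P\vert_{\cE_Q}\|_\infty^2$ without needing a direct norm comparison between $\Theta$ and $\tilde\Theta$. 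Decomposing into primitives before taking the Shimura lift is the manoeuvre that circumvents this complication.
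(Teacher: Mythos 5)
Your architecture is the same as the paper's: Perron's formula with $\beta=\tfrac{d+1}{2}+\epsilon$, a contour shift to $\re(s)=\tfrac{d}{2}$ with no residues, mean-square bounds on the critical line via \cite[Theorem 1]{Zhang2005} and \propref{prop:ms-recipro}, a dyadic decomposition, and the choice $H=T^{1/2}$. The one place where you deviate --- decomposing $\Theta$ into primitives before lifting, rather than bounding $\scal{\tilde\Theta,\tilde\Theta}$ directly as the paper does --- is exactly where your argument has a gap. You write that Parseval gives $\sum_j|c_j|^2=\|\Theta\|^2$, ``which by \lemref{lm:ThetaNorm} is controlled by $\|P\vert_{\cE_Q}\|_\infty^2$.'' But \lemref{lm:ThetaNorm} gives $\scal{\Theta,\Theta}\ll_{Q,N}\tfrac{\Gamma(k)}{(4\pi)^k}\|P\vert_{\cE_Q}\|_\infty^2$ with $k=\tfrac{d+1}{2}+\nu$, and the factor $\Gamma(k)(4\pi)^{-k}$ grows superexponentially in $\nu$; it is not absorbed into an implied constant, and uniformity in $\nu$ is precisely what is needed when this theorem is fed into the proof of \thmref{Thm2bis} via \eqref{improved bound on average}.

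The deeper issue is the normalization mismatch this creates. Zhang's mean-square theorem applies to $L(\tfrac12+it,\tilde f_j)$ for the \emph{arithmetically normalized} primitive form ($A_j(1)=1$), whereas your $c_j$ are spectral coefficients against \emph{$L^2$-normalized} forms. Converting between the two reintroduces the Petersson norms $\scal{g_j,g_j}$ of the primitives (via Hoffstein--Lockhart, as in the proof of \thmref{thm:bounds on Fourier coefficients}) together with the ratio of the gamma factors at weight $k$ versus weight $2k-1$ --- that is, exactly the norm comparison between $\Theta$ and $\tilde\Theta$ that you state your manoeuvre circumvents. It does not: without it, the claimed bound $\int_0^H|L(\tfrac12+it,\tilde\Theta)|^2\,dt\ll_Q H^{1+\epsilon}\|P\vert_{\cE_Q}\|_\infty^2$ does not follow. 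The gap is repairable (carry the factor $\bigl(\tfrac{(4\pi)^{d+2\nu}}{\Gamma(d+2\nu)}\scal{\tilde\Theta,\tilde\Theta}\bigr)^{1/2}$ explicitly and bound $\scal{\tilde\Theta,\tilde\Theta}\ll_{Q,N}\tfrac{\Gamma(d+2\nu)}{(4\pi)^{d+2\nu}}\|P\vert_{\cE_Q}\|_\infty^2$ by Rankin--Selberg unfolding, which is what the paper does; or push your primitive decomposition through with the Hoffstein--Lockhart lower bound at both weights and accept a polynomial loss in $\nu$), but as written the key step fails.
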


\begin{proof} 
  When $\nu>0$ and $d$ is even, we recall that 
    \begin{align*}
        R_\Theta (s) =  \frac{L(s-\tfrac{d-1}{2},\tilde \Theta )}{\zeta (s)L(s-\tfrac{d}{2}+1,\chi ) }, 
    \end{align*} 
    where $\tilde \Theta\in S_{d+2\nu }$ is the Shimura lift of $\Theta$, converges absolutely for $\re(s) > \tfrac{d+1}{2}$ and has an analytic continuation to the half-plane $\re(s)\geq \tfrac{d}{2}$. As in the proof of \thmref{Thm1} we find that
    \begin{align*}
        \int_{d/2\pm iH}^{\beta \pm iH}R_\Theta (s) \frac{T^s}{s} ds \ll \frac{T^\beta }{H},
    \end{align*}
   and \propref{prop:ms-recipro} together with \cite[Theorem 1]{Zhang2005} yield 
    \begin{align*}
      \int_{1}^H |R_\Theta(\tfrac{d}{2}+it)|\, dt \ll_N\, \left(\tfrac{(4\pi)^{d+2\nu}}{\Gamma (d+2\nu )}\scal{\tilde \Theta ,\tilde \Theta }\right)^{1/2} H^{1+ \epsilon } 
    \end{align*}
    for any $\epsilon >0$. The argument given to prove \lemref{lm:ThetaNorm} provides us with the estimate $\scal{\tilde\Theta,\tilde \Theta }\ll_{Q,N} \tfrac{\Gamma (d+2\nu )}{(4\pi )^{d+2 \nu }}\|P\vert_{\cE_Q}\|^2_\infty$. Hence
    \begin{align*}
      \int_{1}^{H} & \frac{|R_\Theta (\tfrac{d}{2}+it)|}{|\tfrac{d}{2}+it|} dt
      \ll  \sum_{k\ll \log H} \frac{2^{k+1}}{H} \int_{H/2^{k+1}}^{H/2^k} |R_\Theta (\tfrac{d}{2}+it)| dt  \ll_{Q,N}  H^{\epsilon }\|P\vert_{\cE_Q}\|_\infty.
    \end{align*} 
Upon setting $\beta = \tfrac{d+1}{2} + \epsilon$, we obtain by Cauchy's residue theorem that 
\begin{align*}
  \sum_{x\in \Omega_T} P(x) = O\left(T^{d/2} ( T^{\tfrac{1}{2} + \epsilon} H^{-1} + \|P\vert_{\cE_Q}\|_\infty  H^{\epsilon } )\right).
\end{align*}
The statement follows by choosing $H=T^{1/2}$.
\end{proof}

\bibliographystyle{alpha}
\bibliography{biblio}

  \end{document}